\newtheorem{thm}{Theorem}
\newtheorem{example}{Example}
\def\ve{\varepsilon}
\def\vr{\varepsilon}
 \DeclareMathOperator\erf{erf}
\begin{document}
\title{Singularly perturbed reaction-diffusion problems with discontinuities in the initial and/or the boundary data}

\author{J.L. Gracia\footnote{IUMA - Department of Applied Mathematics, University of Zaragoza, Spain. email: jlgracia@unizar.es.
The research of this author was partly supported by the Institute of Mathematics and Applications (IUMA), the
project MTM2016-75139-R and the Diputaci\'on General de Arag\'on (E24-17R).} \, and \,
E. O'Riordan\footnote{School of Mathematical Sciences, Dublin City
University, Ireland.
email: eugene.oriordan@dcu.ie}}

\date{\today}



\maketitle

{\bf Abstract:}
Numerical approximations to the solutions of three different problem classes of  singularly perturbed parabolic reaction-diffusion problems, each with a discontinuity in  the bound\-ary-initial data,  are generated. For each problem class, an analytical function  associated with the discontinuity in the data, is identified. Parameter-uniform  numerical approximations to the difference between the analytical function and the solution of the singularly perturbed problem  are generated using  piecewise-uniform Shishkin meshes.  Numerical results are given to illustrate all the theoretical error bounds established in the paper.

\section{Introduction}

 To establish theoretical error bounds for a numerical method,  one  requires the solution of the continuous problem to
be sufficiently regular, in order  that certain partial derivatives of the solution are bounded on the closed domain. For parabolic problems, this often requires the assumption of sufficient compatibility conditions between the initial and boundary data, which can be viewed as solely a theoretical \cite{flyer1} constraint. However, accuracy can be lost in the numerical approximations if insufficient compatibility is imposed \cite{temam,flyer2}, especially if  a higher order numerical method is utilized. Moreover, certain mathematical models  (e.g., Biot's consolidation theory of porous media~\cite{biot1,biot2}) consider mathematical models with discontinuities between the initial and boundary data deliberately built into the problem formulation. In this paper, we consider the effect of discontinuous boundary/initial data on numerical approximations, in the context of singularly perturbed parabolic problems.

The solutions of singularly perturbed problems, with smooth data, typically exhibit boundary layers, whose widths depend on the singular perturbation parameter.
Additional interior layers can appear when the coefficients of the differential operator are discontinuous or if the inhomogenous term contains a point source \cite{bulg-interior,disc-rd1,perth,shishkin3}.
In all of these problem classes, parameter-uniform numerical methods \cite{fhmos} have been  constructed, by using {\it a priori} information about both the location and character of all boundary/interior layers that are present in the solution and using this analytical information to design an appropriate piecewise-uniform Shishkin mesh \cite{fhmos}  for the problem.

 In the main, the data for the problem  need to be sufficiently smooth, in order to prevent further classical singularities  appearing in the solution. As the smoothness of the data reduces, then the order of convergence can also reduce \cite{shish-2007weak,shish-2007weak-Steklov,Zhemukhov1,Zhemukhov2}. In the case of sufficiently smooth data satisfying  second-order compatibility conditions at the corners of the space-time domain (see Appendix 1), the typical error bound \cite{ria}  in the $L_\infty$ norm for singularly perturbed parabolic problems of reaction-diffusion type,  is of the form
\[
\Vert \bar U -u \Vert \leq C (N^{-1} \ln N )^2 + C M^{-1},
\]
when one uses a tensor product of an appropriate piecewise-uniform Shishkin mesh in space (with $N$ elements) and a uniform mesh (with $M$ elements) in time, to generate a global approximation $\bar U$ to the continuous solution $u$. If there is only  zero-order compatibility conditions assumed and a possible jump in the first time derivative of the boundary data, then the same numerical method retains parameter-uniform convergence, albeit with some minor reduction in the order \cite{shish-2007weak} of convergence in space. If there is a discontinuity in the first space derivative of the initial condition, the order of convergence can drop to $O(N^{-1} + M^{-0.5})$ \cite{shish-2007weak-Steklov}, \cite[\S 14.2]{Shish-redbook}. Nevertheless, the numerical method (based on an appropriate piecewise-uniform mesh)  retains parameter-uniform convergence of some positive order.

 However, for the singularly perturbed heat equation
\[
-\ve u _{xx} +u_t = 0, \ (x,t) \in (0,1)\times (0,T],
\]
if the initial condition $u(x,0)$ is discontinuous \cite{bulg2012-1} or if there is an incompatibility between the initial and boundary conditions $u(0^+,0) \neq u(0,0^+)$
then rectangular meshes do not produce parameter-uniform numerical methods \cite{hemker,hemker2}. Note that if one incorporates a co-ordinate system aligned to the similarity transformation $\theta = x/(2\sqrt{\ve t})$, then one can design a piecewise-uniform mesh \cite{shishkin2} in this transformed co-ordinate system, to generate parameter-uniform numerical approximations. However, we will not consider the use of such  transformed co-ordinate systems here.

In this paper, we introduce a mixed analytical/numerical method, which is based on the ideas in \cite{flyer2}. This method
first  identifies explicitly  the main singular component $s(x,t)$ associated with the singularity and uses a piecewise-uniform Shishkin mesh to generate a parameter-uniform numerical approximation to the difference, $u-s$, between the exact
 solution $u$ and the main singular component $s$. In this way, parameter-uniform numerical approximations are created for singularly perturbed problems with discontinuous initial conditions, problems with incompatible initial/boundary conditions and problems with discontinuous boundary conditions.

Below we  examine singularly perturbed problems of the form
\[
u_t-\ve u_{xx} +b(t) u =f(x,t),\quad (x,t) \in Q:=(0,1) \times (0,T]; \quad   b(t) \ge 0, \ t \geq 0,
\]
where the boundary/initial data will have a discontinuity at some point on the boundary $\bar Q \setminus Q$. Note that the coefficient $b(t)$ is assumed to be independent of the space variable $x$. This assumption permits us present relatively simple proofs for all of the pointwise bounds on the derivatives of the components of the continuous solutions, presented below. In \cite{go_dbb}, a related problem class was considered, which involved the differential equation
\[
\ve (u_t-u_{xx}) +b(x,t) u =f, (x,t) \in (0,1) \times (0,1]; \quad  b(x,t) >0;
\]
with  incompatible boundary-initial data.
Note that the coefficient $b(x,t)$ can vary in space. However, the corresponding proofs (establishing bounds on the derivatives of the layer components) are significantly longer and contain much more technical detail to what is required for the three problem classes considered in the current paper. Nevertheless, in our numerical results section, we present test examples where the coefficient $b(x,t)$ does vary in space and we see that (from a computational perspective) the assumption $b(t)$ appears not necessary, in practice. In summary, the assumption $b(t)$ allows us present theoretical error bounds for three problem classes in a single publication. In this way, we see the minor modifications in the overall approach, when dealing with singularly perturbed problems with discontinuous data. In addition, initial layers appeared in the problem class studied in \cite{go_dbb}, which required the use of a Shishkin mesh in time. In the current paper, a uniform mesh in time suffices, as the time derivative of the continuous solution has a coefficient of order one in this paper.

 The paper is structured as follows: In \S \ref{sec:continuous} the asymptotic behaviour of the solution $u$ of the three classes problems is analysed. For each class of problems the singular component $s$ is identified and the behaviour of $u-s$ is revealed by using an appropriate decomposition into regular and layer components. In \S \ref{sec:numericalmethod}  a finite difference scheme is proposed to approximate $u-s$ for each class of problems. Each scheme uses the backward Euler method in time and standard central differences in space defined on appropriately constructed meshes of Shishkin type. Error estimates in the maximum norm are established,  which yield  global parameter-uniform convergence for the methods. In \S \ref{sec:numerical} some numerical results for the three classes of problems are given and they indicate that our error estimates are sharp. The paper is completed with two technical appendices.

\noindent {\bf Notation.}
Throughout the paper,   $C$  denotes a generic constant that is independent of the singular perturbation parameter $\ve$ and of all discretization parameters.
  The $L_\infty$ norm on the domain $D$  will  be denoted by $\Vert \cdot \Vert _D$  and the subscript is omitted if the domain is $\bar Q$.

\section{Three classes of problem} \label{sec:continuous}

Before we define the three problem classes to be examined in this paper, we define  a set of singular functions which are associated with the singularities that are generated by discontinuous boundary/initial data in singularly perturbed problems.

The  singular function $s:(-\infty,\infty)\times [0,\infty) \rightarrow (-1,1)$ is defined as
 \begin{equation}\label{definition-of-s}
s(x,t):=e^{-b(0) t}\erf\left(\frac{x}{2\sqrt{\vr t}} \right), \ \hbox{where} \ \erf(z):= \frac{2}{\sqrt{\pi}} \int _{r=0}^z e^{-r^2} \ dr.\end{equation}
 This function satisfies  the constant coefficient quarter plane homogeneous problem
\begin{align*}
& s_t-\vr s_{xx} +b(0)s =0, \quad (x,t) \in (0,\infty) \times (0,\infty), \\
&s(0,t)=0,\quad t > 0; \quad
s(x,0)=1, \quad x >0.
\end{align*}
Observe that $s \not \in C^0([0,\infty) \times [0,\infty))$. Define the associated set of  functions
\[
s_n(x,t) := t^ns(x,t), \quad n \geq 0;\quad cs_n(x,t):= t^n(1-s(x,t)).
\] Then $s_n,cs_n \in C^{n-1+\gamma}([0,\infty) \times [0,\infty)),\ n \geq 1$. \footnote{The space $C^{n+\gamma}(\bar Q ) $ is
the set of all functions, whose derivatives of
order $n$ are H\"{o}lder continuous of degree $\gamma >0$. That is,
\[
C^{n+\gamma } ( \bar Q ) := \left \{ z : \frac{\partial ^{i+j} z}{
\partial x^i
\partial t^j } \in C^{\gamma }(\bar Q), \ 0 \leq i+2j \leq n \right \} .
\]} Note further that
\begin{align*}
&(s_n) _t-\vr (s_n)_{xx} +b(0)(s_n) =n(s_{n-1}) \\ \hbox{and} \quad
& (cs_n) _t-\vr (cs_n)_{xx} +b(0)(cs_n) =n(cs_{n-1}) +b(0)t^n.
\end{align*}
Hence, for all $n \geq 1$, we have the recurrence relationship
\begin{equation}\label{useful}
Ls_n(x,t) = n s_{n-1}(x,t) +(b(t)-b(0))s_n(x,t),
\end{equation}
 where
$
Lz:=z_t- \vr z_{xx}+b(t)z.
$

\subsection{Problem Class 1: incompatible boundary-initial data}
Consider  the singularly perturbed parabolic problem: Find $u:\bar Q \rightarrow \mathbb{R}$ with $Q:=(0,1)\times(0,T],$ such that
\begin{subequations} \label{Cproblem}
\begin{align}
& Lu =  u_t- \vr u_{xx}+b(t)u=f(x,t), \ \text{in } Q;\quad  b(t)\ge \beta \geq 0,  \forall t \geq 0;\\
&u(0,t)=0 , \ u(1,t)=0, \ t\ge 0, \quad u(x,0)=\phi(x), \ 0<x<1; \\
&\phi(0^+)\ne 0, \ \phi(1)=0, \quad  f,b \in C^{4+\gamma }(\bar Q ),  \ \ \phi \in C^4(0,1);
 \\
& f(1,0)=- \ve \phi ''(1^-),\quad  -\ve \phi ^{(iv)}(1^-) +b(0)\phi ''(1^-) = (f_t+f_{xx})(1,0); \label{comp} \\
& f(0,0)= -\ve \phi ''(0^+).  \label{extra-1}
\end{align}
\end{subequations}
Since this problem is linear, there is no loss in generality in assuming homogeneous boundary conditions.  Observe that there is a discontinuity in the data at the corner point $(0,0)$.
The discontinuity in the data for this first problem class is the same discontinuity as that examined in  \cite{go_dbb}.  The assumption ~\eqref{extra-1} on the data allows us present a simplified version of the numerical analysis. Without this assumption, we would require more of the analysis from \cite{go_dbb}.   By assuming the compatibility conditions (\ref{comp}), we prevent any classical singularities appearing in the vicinity of the point $(1,0)$ (see Appendix 1).

 In order to deduce the asymptotic behaviour of the solution of problem~\eqref{Cproblem}, it is decomposed into the sum
\begin{equation}\label{decomp1}
u=\phi(0^+) s(x,t) +y.
\end{equation}
Note that $\vert \phi(0^+) \vert$ is the magnitude of the jump in the boundary/initial data, at $(0,0)$. The remainder  $y$, defined by (\ref{decomp1}), satisfies the problem
\begin{subequations}\label{eq:ComponentY2}
\begin{eqnarray}
&Ly=F:=f-(b(t) - b(0))\phi(0^+)s, \ (x,t)\in Q; \\
&y(0,t)=0, \ y(1,t)=-\phi(0^+) s(1,t), \ t\ge 0; \\ & y(x,0)=\phi(x)-\phi(0^+), \ 0<x<1.
\end{eqnarray}
\end{subequations}
Recall that $\phi (1)=0$ and so $y(1^-,0)=y(1,0^+)$. Hence the boundary and initial data are continuous in the case of problem (\ref{eq:ComponentY2}),
$F  \in C^{0+\gamma} (\bar Q)$ and,   using assumption ~\eqref{extra-1}, we have that  $y  \in C^{2+\gamma} (\bar Q)$.
We further  decompose the solution of (\ref{eq:ComponentY2})  as follows:
\begin{subequations}\label{decomp}
\begin{equation}
y= v+w_L+w_R,
\end{equation}
where the regular component $v$ satisfies the problem
\begin{equation}\label{regular}
L^*v^*=f^*-(b(t)-b(0))\phi (0^+), \ (x,t)\in Q_0^*:=(-a,1+a) \times (0,T],
 \end{equation}
which is posed on an extended (in the spatial direction) domain $Q_0^*$ {\footnote{We use the notation $f^*:\bar Q^* \rightarrow \mathbb{R}$ to denote the extension of any function $f:\bar Q \rightarrow \mathbb{R}$ such that $f^* (x,t) \equiv f(x,t), \ (x,t) \in \bar Q$  and $\bar Q \subset \bar Q^*$.}}  and $a$ is an arbitrary positive parameter. The initial/boundary values for the regular component are determined by  $v^* =v^*_0+\vr v^*_1$, where these two subcomponents, in turn,  satisfy
\begin{eqnarray}
&bv^*_0+(v^*_0)_t=f^*-(b(t)-b(0))\phi (0^+),\quad t>0;\\
& v^*_0(x,0)=y^*(x,0), \quad  x \in (-a,1+a) , \\
&L^*v_1^* = (v^*_0)_{xx},\quad (x,t)\in Q_0^*, \quad v^*_1=0, \ (x,t)\in \partial  Q_0^*. \label{eq5e}
\end{eqnarray}
Observe that the singular function $s$ is not involved in the definition of the regular component. Moreover, observe that
\[
v_0(0,t) = \int _{s=0}^t \bigl( f(0,s) - (b(s)-b(0) \phi (0^+) \bigr) e^{-(b(t)-b(s))} \ ds ,\qquad t \geq 0.
\]
The boundary layer components $w_L,w_R$ satisfy the problems
\begin{eqnarray}
&L^* w^*_L=F^*-L^* v^* , \ (x,t)\in Q_1^*:=(0,1+a) \times (0,T],\\ &  w^*_L(0,t)=-v^*(0,t), \  w^*_L(x,0)=0, \ w^*_L(1+a,t)=0;\\
&Lw_R=0, \ (x,t)\in Q,\\ & w_R(1,t)=-(v^*+w^*_L)(1,t), \  w_R(x,0)=0, \ w_R(0,t)=0.
\end{eqnarray}
\end{subequations}
By construction  and by using the extended domains to avoid compatibility issues, $v, w_R \in C^{4+\gamma}(\bar Q)$. Note that $ L^*w^*_L(0,0) =0$ and $v_t(0,0)=f(0,0)$, so the first order compatibility conditions are satisfied (see Appendix 1). Hence,  $w_L \in C^{2+\gamma}(\bar Q)$; but, in general, $w_L \not\in C^{4+\gamma}(\bar Q)$.
\begin{thm} (Problem Class 1)
\begin{subequations}
For the regular component $v$ we have, for all $0 \leq i+2j \leq 4$ with $0 < \mu < 1$, the bounds
\begin{eqnarray}\label{vbounds}
\left\Vert \frac{\partial ^{i+j}v}{\partial x^{i}\partial t ^{j}}  \right\Vert  \leq C (1+\vr ^{1-(i/2 )}),
\end{eqnarray}
and for the boundary layer components, we have the bounds
\begin{align}
 \left\vert \frac{\partial ^{i+j}}{\partial x^{i}\partial t ^{j}} w_{ R}(x,t) \right\vert &\leq C \vr ^{-(i/2 )}e^{-\frac{(1-x)}{\sqrt{\vr}}}, \quad  0 \leq i+2j \leq 4;
 \quad (x,t) \in \bar Q;\label{expRlayer-bounds} \\
 \left\vert \frac{\partial ^{i+j}}{\partial x^{i}\partial t ^{j}} w_{L}(x,t)\right \vert  &\leq C \vr ^{-(i/2)}e^{-\frac{\mu}{2} \frac{x}{\sqrt{T\vr}}}, \quad   0 \leq i+2j \leq 2;  \quad (x,t) \in \bar Q.\label{expLlayer-xbounds}
\end{align}
In addition, for the higher derivatives, we have
\begin{eqnarray}
\left \vert \frac{\partial ^{i}}{\partial x^{i}} w_{L}(x,t) \right \vert &\leq&  \frac{C}{\vr (\sqrt{\vr t})^{i-2} } e^{-\frac{\mu}{2} \frac{x}{\sqrt{T\vr}}}, \quad i=3,4; , \quad (x,t) \in Q;\label{expLlayer-4th-xbounds}\\
\left \vert \frac{\partial^2}{\partial t^2} w_{L}(x,t)\right \vert &\leq& \frac{C}{ t}, \quad (x,t) \in Q. \label{expLlayer-2nd-tbounds}
\end{eqnarray}
\end{subequations}
\end{thm}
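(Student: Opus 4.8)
The plan is to establish each bound by working from the defining problems for the components and applying maximum-principle arguments together with barrier functions adapted to the reaction-diffusion operator $L$.

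\medskip

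\noindent\textbf{Regular component.}
First I would treat the bounds~\eqref{vbounds}. Since $v^*=v_0^*+\vr v_1^*$, I would bound $v_0^*$ and its derivatives independently of $\vr$: the subproblem for $v_0^*$ is a first-order ODE in $t$ (parametrized by $x$) whose data $f^*-(b(t)-b(0))\phi(0^+)$ and initial value $y^*(x,0)$ are smooth on the \emph{extended} domain, so repeated differentiation and the explicit integrating-factor solution give $\|\partial^{i+j}v_0^*/\partial x^i\partial t^j\|\le C$. For $v_1^*$, solving~\eqref{eq5e} via a standard maximum principle on $Q_0^*$ gives $\|v_1^*\|\le C$; the crucial point is that the spatial domain has been extended by a fixed width $a$, so boundary layers for $v_1^*$ live only in the strips near $x=-a,1+a$, away from $\bar Q$. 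Differentiating~\eqref{eq5e} and bounding derivatives of $v_1^*$ then introduces the $\vr^{-(i/2)}$ factors only through layers that are confined outside $\bar Q$; restricting to $\bar Q$ and recombining $v=v_0+\vr v_1$ yields the stated $C(1+\vr^{1-i/2})$, where the extra $\vr$ multiplying $v_1$ is what converts $\vr^{-i/2}$ into $\vr^{1-i/2}$.

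\medskip

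\noindent\textbf{Layer components $w_R$ and the low-order $w_L$ bounds.}
For~\eqref{expRlayer-bounds} I would use the barrier $B_R(x,t):=C e^{-(1-x)/\sqrt{\vr}}$, check that $L B_R\ge 0=L w_R$ and that $B_R$ dominates $w_R$ on the parabolic boundary (using that $w_R(1,t)=-(v^*+w_L^*)(1,t)$ is $O(1)$ and $\vr^{-i/2}$ arises from differentiating the exponential), then apply the comparison principle; higher derivatives follow by differentiating the equation, using the already-established bounds on lower derivatives as forcing, and reapplying the barrier. The same barrier technique, now with the exponential $e^{-\frac{\mu}{2}x/\sqrt{T\vr}}$ and $\beta\ge 0$ handled by a $\mu$-factor to absorb the possibly vanishing reaction coefficient, gives~\eqref{expLlayer-xbounds} for $0\le i+2j\le 2$. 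Here the subtlety is that $w_L$ is only $C^{2+\gamma}$, so the argument for $i+2j\le 2$ can appeal directly to the problem for $w_L$ with $C^{0+\gamma}$ data.

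\medskip

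\noindent\textbf{The high-order singular bounds~\eqref{expLlayer-4th-xbounds}--\eqref{expLlayer-2nd-tbounds}.}
This is where the real work lies and where I expect the main obstacle. Because $w_L\notin C^{4+\gamma}(\bar Q)$, the third and fourth spatial derivatives and the second time derivative blow up as $t\to 0^+$, so no uniform barrier on $\bar Q$ can control them; instead the bounds must carry the singular weights $(\sqrt{\vr t})^{-(i-2)}$ and $t^{-1}$. The natural route is to differentiate the equation for $w_L$ in $t$ (or $x$), obtaining an equation for $\partial_t w_L$ whose data inherit a $t^{-1}$-type singularity from the lack of compatibility, and then to use the auxiliary singular functions $s_n=t^n s$ introduced in~\eqref{definition-of-s}–\eqref{useful}: these have precisely the regularity $s_n\in C^{n-1+\gamma}$, so I would subtract an appropriate combination of $s_n$ to regularize, control the regularized remainder by a barrier, and recover the weighted bound for the subtracted singular part from the explicit form of $s$ and its derivatives. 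The governing estimate is that $\partial^i s/\partial x^i$ and $\partial^2 s/\partial t^2$ scale like $(\sqrt{\vr t})^{-i}e^{-x^2/(4\vr t)}$ and $t^{-1}$ respectively, after bounding $e^{-x^2/(4\vr t)}\le C e^{-\frac{\mu}{2}x/\sqrt{T\vr}}$ on the finite time interval $t\le T$. The delicate step is matching the $t$-dependence: one must show that the weight $(\sqrt{\vr t})^{-(i-2)}$ (rather than $(\sqrt{\vr t})^{-i}$) is correct, i.e. that two orders of the spatial singularity are absorbed by the two derivatives already controlled in~\eqref{expLlayer-xbounds}, which I would verify by carefully tracking the forcing terms generated when differentiating the defining problem and by invoking~\eqref{useful} to handle the $(b(t)-b(0))$ contributions.
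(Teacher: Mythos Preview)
Your treatment of $w_R$ and the low-order $w_L$ bounds follows the same route as the paper (barrier plus maximum principle, with the arguments of~\cite{ria}). For the regular component the paper proceeds a little differently: it does not argue that the layers of $v_1^*$ are confined near $x=-a,1+a$; instead it passes to the stretched variable $x/\sqrt{\ve}$, applies the classical a~priori estimates of~\cite{ladyz} in that variable, and transforms back. Your layer-confinement remark is also internally inconsistent---if the $\ve^{-i/2}$ factors were truly absent on $\bar Q$, then $\ve v_1$ would contribute $O(\ve)$ rather than $O(\ve^{1-i/2})$. The stretched-variable bound $\|\partial_x^i\partial_t^j v_1^*\|\le C\ve^{-i/2}$ holds on all of $\bar Q_0^*$, and the prefactor $\ve$ is what produces~\eqref{vbounds}.

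For the hard part,~\eqref{expLlayer-4th-xbounds}--\eqref{expLlayer-2nd-tbounds}, your plan (differentiate the $w_L$-equation, then subtract a combination of $s_n$) is in the right spirit but is not what the paper does, and it misses a concrete ingredient. The paper does not differentiate first; it introduces a new auxiliary function $P(x,t):=B(t)-\int_0^t s_0(x,r)\,dr$ (with $B(t)=(1-e^{-b(0)t})/b(0)$, or $B(t)=t$ when $b(0)=0$), which solves the constant-coefficient homogeneous problem with $P(0,t)=B(t)$, $P(x,0)=0$, and then forms the secondary decomposition
\[
w_L^*(x,t)=-v_t^*(0,0)\,P^*(x,t)+\tfrac12\,b'(0)\phi(0^+)\,cs_2(x,t)+R^*(x,t).
\]
The specific coefficients are chosen so that $L^*R^*\in C^{2+\gamma}$ and second-order compatibility holds for $R^*$ at $(0,0)$, giving $R^*\in C^{4+\gamma}(\bar Q_1^*)$. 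All the singular $t$-weights in~\eqref{expLlayer-4th-xbounds}--\eqref{expLlayer-2nd-tbounds} then come from the explicit derivatives of $P$ and $cs_2$ (cf.\ Appendix~2), while $R^*$ contributes only the benign $\ve^{-i/2}$; the exponential decay for $i=3,4$ is recovered by writing the equation for $R_2^*:=\ve R^*_{xx}$ and bounding its forcing by $Ce^{-\mu x/\sqrt{4\ve T}}$. Note that $P$ is a time-integral of $s_0$, not a member of the $s_n$ family, so ``an appropriate combination of $s_n$'' alone would not deliver a $C^{4+\gamma}$ remainder---identifying $P$ and the precise multipliers is the step your outline leaves open.
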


\begin{proof} We begin by establishing the bounds on the regular component.
 Note that $v_0$ is bounded independently of $\ve$. Consider the problem (\ref{eq5e}) transformed with the stretched variable
   $\frac{x}{\sqrt{\ve}}$.
Apply the {\it a priori} bounds \cite{ladyz} to establish bounds on the partial derivatives of $v_1$ in the stretched variables. Transforming back to the original variables, we deduce the bounds  (\ref{vbounds}). The bounds on $w_R$ are obtained in the usual way \cite{ria}.

 We now consider the component $w_L$. Observe that, with $cs_n:=t^n(1-s)$, we have
\[
L^* w^*_L =(b(t)-b(0) -tb'(0))\phi(0^+)cs_0+ b'(0)\phi(0^+)cs_1, \ (x,t) \in  Q_1^*;
\]
and $L^* w^*_L \in C^{0+\gamma}(\bar Q_1^*)$. Using $e^{-z^2} \leq e^{0.25-z}$, it follows that
\begin{align*}
&\vert cs_0(x,t) \vert \leq  Ce^{-\frac{x}{2\sqrt{ \ve T}}} , \qquad  \vert cs_1(x,t) \vert \leq  Cte^{-\frac{x}{2\sqrt{ \ve T}}},\ t \leq T;\\
&\left\vert \frac{\partial }{\partial t} cs_1(x,t)\right\vert \leq C e^{-\mu\sqrt{\frac{1 }{2t\vr}}x}, \qquad \mu <1.
\end{align*}
Hence, using a maximum principle, we can deduce that
\[
\vert w_L(x,t) \vert \leq C e^{-\frac{x}{2\sqrt{ \ve T}}}e^{\theta t}, \quad \theta > \frac{1}{4T} - \beta, \quad  (x,t) \in Q;
\]
and, by applying the arguments from \cite{ria}, we get that for $0\leq i+2j \leq 2$,
\[
 \left\vert \frac{\partial ^{i+j}}{\partial x^{i}\partial t ^{j}} w_{ L}(x,t) \right\vert \leq C \vr ^{-(i/2 )}e^{-\frac{\mu x }{2\sqrt{T\vr}}}, \quad (x,t) \in Q.
\]
To obtain bounds on the higher derivatives of $w_L$, we introduce a further decomposition  of this boundary  layer function.
Consider the continuous function
\begin{eqnarray*}
P(x,t) &:=& B(t)- \int _{r=0}^t s_0(x,r) \ dr ,  \\
 \hbox{where} \quad B(t)  &:=&\begin{cases} \frac{1-e^{-b(0)t}}{b(0)}, & \hbox{ if } \ b(0) \neq 0, \\
t, & \hbox{ if } \  b(0)=0 .
\end{cases}
\end{eqnarray*}
This function has been constructed to satisfy the following problem
\begin{align*}
& P_t-\vr P_{xx} +b(0)P=0, \quad \text{in } Q, \\
&P(0,t)=B(t)  \quad t\ge 0, \quad P(x,0)=0, \quad 0<x<1.
\end{align*}
Note that
\[
P(x,t)  = B(t)-t + \int _{r=0}^t cs_0(x,r) \ dr.
\]
We introduce the secondary expansion
\[
w^*_L(x,t) =-v^*_t(0,0) P^*(x,t) + b'(0) \phi (0^+) \frac{1}{2} cs_2(x,t) +R^*(x,t) ;
\]
and the remainder term $R^*$, defined over $\bar Q^*_1$, satisfies the problem
\begin{align*}
L^*R^* &=  (b(t)-b(0)-b'(0)t)\phi (0^+) cs_0  + (b(t)-b(0))  v^*_t(0,0) P^* \\
&  - (b(t)-b(0))b'(0)\phi (0^+)\frac{1}{2} cs_2,  \\
 R^*(x,0)&=0, \ 0 < x < 1+a; \quad  R^*(1+a,t) =R^*(1+a,t), \ t \geq 0; \\
 R^*(0,t) &= (t v^*_t(0,0)- v^*(0,t)) +v^*_t(0,0)  (B(t) -t)-b'(0)\phi (0^+)\frac{t^2}{2}.
\end{align*}
Note that $v(0,0)=0$ and $\vert R^*(0,t) \vert \leq C t^2$. Moreover, using properties of $s_2$ (see Appendix 2) one can check that  $L^*R^* \in C^{2+\gamma} (\bar Q^*_1)$ and second level compatibility is satisfied at the point $(0,0)$. Hence, we have $R^* \in C^{4+\gamma} (\bar Q^*_1)$. Using this regularity, we can deduce the bounds
\[
\left\Vert \frac{\partial ^{i+j}R^*}{\partial x^{i}\partial t ^{j}}  \right\Vert  \leq C (1+\vr ^{-(i/2 )}), \quad  0\le i+2j\le 4.
\]
From this, we obtain
\[
\left \vert \frac{\partial ^{2}}{\partial t ^{2}} w_{L}(x,t)\right\vert  \leq Ct^{-1}, \quad (x,t) \in Q.
\]
To obtain the desired bounds (involving the decaying exponential) on the third and fourth space derivatives of $w_L$, we form a problem for $R^*_2:=\ve R^*_{xx}$ by differentiating the differential equation satisfied by $R^*$  twice to get
\begin{align*}
L^*R_2^* &=  (b(0)+b'(0)t-b(t))\phi (0^+) \ve(cs_0 )_{xx} + (b(t)-b(0)) v_t(0,0) \ve P_{xx}^*  \\
&  - (b(t)-b(0))b'(0)\phi(0^+)\ve \frac1{2} (cs_2)_{xx},\\
 R_2^*(x,0) &=0, R_2^*(1+a,t) =R_2^*(1+a,t); \quad
R_2^*(0,t) =(g'+bg- h)(t);
\end{align*}
where $ g(t):=R^*(0,t),\ h(t) :=L^*R^*(0,t)$ are both smooth functions independent of $\ve$. We can complete the proof (as in \cite{ria}) by noting that
\begin{eqnarray*}
 \left \vert  L^*R^*_2  (x,t) \right\vert &\leq&  Ce^{-\mu \frac{x}{\sqrt{4\ve T}}};\\
 \left\vert  (L^*R^*_2) _{t} (x,t) \right\vert+  \left \vert \ve (L^*R^*_2)_{xx} (x,t) \right\vert &\leq& Ce^{-\mu \frac{x}{\sqrt{4\ve T}}}.
\end{eqnarray*}

\end{proof}

%

\subsection{Problem Class 2: discontinuous initial condition}

Consider  the singularly perturbed parabolic problem
\begin{subequations} \label{CproblemDiscontinuous}
\begin{align}
&Lu =f(x,t), \ (x,t) \ \text{in } Q;\quad
u(0,t)=0, \ u(1,t)=0, \ t\ge 0; \\
& u(x,0)=\phi(x), \ 0<x<1, \quad \phi (0) = \phi (1) =0; \label{comp0}\\
& \phi(d^-)\ne \phi(d^+), \quad 0 < d <1; \\
& f(0,0)=- \ve \phi ''(0^+), -\ve \phi ^{(iv)}(0^+) +b(0)\phi ''(0^+) = (f_t+f_{xx})(0,0); \label{comp1} \\
& f(1,0)=- \ve \phi ''(1^-), -\ve \phi ^{(iv)}(1^-) +b(0)\phi ''(1^-) = (f_t+f_{xx})(1,0); \label{comp2}\\
&f,b \in C^{4+\gamma }(\bar Q ),  \ \ \phi \in C^4((0,1) \setminus \{d \}) \\
&\phi '(d^-)=  \phi '(d^+), \quad \phi ''(d^-)=  \phi ''(d^+) \label{extra-2}.
\end{align}
\end{subequations}
The assumption of the compatibility conditions (\ref{comp0}), (\ref{comp1}) and (\ref{comp2}) ensures that  no classical singularity appears near the corner points
 $(0,0), (1,0)$. However, observe that the initial function $\phi(x)$ is discontinuous at $x=d$. This will cause an interior layer to appear in the solution, near the point $(d,0)$.
 The assumption $\phi '(d^-)=  \phi '(d^+)$ on the data prevents a drop in the order of convergence in our numerical approximations, as in the case of \cite[\S 14.2]{Shish-redbook}.

Decompose the solution of~\eqref{CproblemDiscontinuous} into the following sum
\begin{equation}\label{decomp2}
u(x,t)=\frac{[\phi](d)}{2}s(x-d,t)+y(x,t), \quad \text{where } \  [\phi](d):=\phi(d^+)-\phi(d^-).
\end{equation}
By the definition (\ref{definition-of-s}) of the discontinuous function $s$, we have that
$$
s(x-d,0)=
\begin{cases}
-1, & \hbox{ for } x <d, \\
0, & \hbox{ for } x=d, \\
1 , & \hbox{ for } x > d.
\end{cases}
$$
The component  $y$ is the solution of the problem
\begin{subequations}\label{eq:ComponentY2Discontinuous}
\begin{align}
&Ly=f-(b(t) - b(0))0.5[\phi](d) s(x-d,t), \ (x,t)\in Q, \\
&y(0,t)=-0.5[\phi](d) s(-d,t), \ y(1,t)=-0.5[\phi](d) s(1-d,t), \quad t\ge 0, \\
&y(x,0)=\phi(x)-0.5[\phi](d)s(x-d,0),\ x \neq d; \\
&  y(d,0) = (\phi(d^+)+\phi(d^-))/2;
\end{align}
\end{subequations}
and $y(x,0)$ is continuous for all $x \in [0,1]$. Moreover, due to (\ref{extra-2}), we have $y(x,0) \in C^2(0,1)$.
Using the maximum principle
\[
\Vert y \Vert \leq C.
\]
The solution $y$ is further decomposed into the sum
\[
y=v+w_L+w_R+w_I,
\]
 where the components $v$ and $w_I$ are discontinuous functions and the components $w_L$ and $w_R$ are continuous functions.
The regular component $v$ is constructed to satisfy the problem
\begin{equation}\label{regular2}
L^*v^*=f^*-(b(t)-b(0))0.5[\phi](d)e^{-b(0)t} s(x-d, 0), \ (x,t)\in Q^*.
 \end{equation}
This problem is posed on the extended domain $Q^*:=(-a,1+a) \times (0,T], a > 0$. The initial/boundary values for the regular component are determined by  $v^* =v^*_0+\vr v^*_1$, where the reduced solution $v_0$ satisfies the initial value problem
\begin{subequations}
\begin{align}
bv^*_0+(v^*_0)_t&=f^* -(b(t)-b(0))0.5[\phi](d) e^{-b(0)t}s(x-d,0),\ t>0; \\
v^*_0(x,0)&=y^*(x,0),\ x \in (-a,1+a).
\end{align}
\end{subequations}
Observe that the reduced solution $v^*_0$ is continuous, but in general \[ (v^*_0)_{x}(d^+,0) \neq (v^*_0)_{x}(d^-,0).\] The first correction $v_1$ is defined as the multi-valued function
$$
v^*_1(x,t):=
\begin{cases}
v_1^-, & \hbox{ for } x\leq d, \ t \geq 0, \\
v_1^+, & \hbox{ for } x\geq d,   \ t \geq 0,
\end{cases}
$$
and the two sides of this function are the solutions of
\begin{eqnarray*}
L^*v_1^- = (v^*_0)_{xx},\quad  -a < x\leq d,\ t >0 ;\\ v_1^-(-a,t) =0, \ t \geq 0; \quad v_1^-(x,0) =0, \ x \leq d; \\
\\L^*v_1^+ = (v^*_0)_{xx},\  \quad  d \leq  x < 1+a,\ t >0 ;\\ v_1^+(1+a,t) =0, \ t \geq 0; \quad v_1^+(x,0) =0, \ x \geq d;
\end{eqnarray*}
where we use $ Lv^-_1(d,t) := (v_0)_{xx}(d^-,t)$ and $ Lv^+_1(d,t) := (v_0)_{xx}(d^+,t)$. Since the regular component $v$ is multi-valued we now define
the subdomains
\[
Q^-:=(0,d) \times (0,T] \quad \hbox{and} \quad  Q^+:=(d,1) \times (0,T].
\]
By using suitable extensions to these  subdomains, we can have $v^\pm_1 \in C^{4+\gamma} (\bar Q^\pm)$. For example,
\begin{align*}
&L^*  (v_1^-)^* = ((v_0)_{xx})^*,\quad  (x,t) \in Q_*^-:=(-a, d+a) \times (0,T]; \\
& (v_1^-)^*(-a,t) =   (v_1^-)^* (d+a,t) =0, \ t\ge 0, \quad
 (v_1^-)^*(x,0) = 0, \quad -a<x<d+a.
\end{align*}


The boundary layer components $w_L,w_R$ satisfy the problems
\begin{subequations}\label{bdy-layers2}
\begin{eqnarray}
&Lw_L=Lw_R=0 , \ (x,t)\in Q ,\\ & w_L(0,t)=-v(0,t), \  w_L(x,0)=0, \ w_L(1,t)=0,\\
 &  w_R(0,t)=0,  \  w_R(x,0)=0, \ w_R(1,t)=-v(1,t).
\end{eqnarray}\end{subequations}

\begin{thm} (Problem Class 2)
\begin{subequations}
For the regular component $v$ we have, for all $0 \leq i+2j \leq 4$, the bounds
\begin{eqnarray}
\left \Vert \frac{\partial ^{i+j}v^- }{\partial x^{i}\partial t ^{j}} \right\Vert _{\bar Q^-} ,
\left \Vert \frac{\partial ^{i+j}v^+}{\partial x^{i}\partial t ^{j}}  \right \Vert _{\bar Q^+} \leq C (1+\vr ^{1-(i/2 )}).
\end{eqnarray}
For the boundary layer components,  for all $0 \leq i+2j \leq 4$   and $(x,t) \in \bar Q$,
\begin{eqnarray}
 \left\vert \frac{\partial ^{i+j}w_{L}}{\partial x^{i}\partial t ^{j}} (x,t)\right\vert  \leq C \vr ^{-(i/2)}e^{- \frac{x}{\sqrt{\vr}}};\
 \left\vert \frac{\partial ^{i+j}w_{ R}}{\partial x^{i}\partial t ^{j}} (x,t) \right\vert \leq C \vr ^{-(i/2 )}e^{-\frac{(1-x)}{\sqrt{\vr}}}.
\end{eqnarray}
\end{subequations}
\end{thm}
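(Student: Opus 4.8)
The plan is to follow the same template as the proof for Problem Class 1, exploiting the fact that here the singular function $s(x-d,t)$ is centred at the interior point $d$, so that all the data entering the regular and boundary-layer problems is smooth in a fixed neighbourhood of each lateral boundary $x=0$ and $x=1$. First I would establish the bounds on the regular component. On each subdomain $\bar Q^\pm$ the frozen source $e^{-b(0)t}s(x-d,0)$ is a constant ($-1$ on $\bar Q^-$ and $+1$ on $\bar Q^+$), so the reduced problem defining $v_0^\pm$ is a first-order linear ODE in $t$ whose right-hand side is smooth in $(x,t)$ and independent of $\varepsilon$; differentiating this ODE in $x$ and $t$ and using $f,b\in C^{4+\gamma}$ together with $\phi\in C^4((0,1)\setminus\{d\})$ and the matching conditions \eqref{extra-2}, one obtains that $v_0^\pm$ and its derivatives up to order four are bounded independently of $\varepsilon$ on $\bar Q^\pm$.

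For the corrector $v_1^\pm$, which solves $L^*v_1^\pm=(v_0)_{xx}$ with homogeneous data on the extended subdomains, I would pass to the stretched variable $x/\sqrt\varepsilon$, turning $L^*$ into an operator with $\varepsilon$-independent coefficients and a bounded right-hand side, and then apply the interior parabolic a priori estimates of \cite{ladyz}. Transforming back introduces a factor $\varepsilon^{-i/2}$ on the $i$-th space derivative, so that $\varepsilon v_1^\pm$ contributes $C\varepsilon^{1-i/2}$; adding $v_0^\pm$ gives the stated bound $C(1+\varepsilon^{1-i/2})$ on each $\bar Q^\pm$ separately, which is consistent with the multi-valued nature of $v$ across $x=d$.

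Next I would treat the boundary layers. Since $w_L,w_R$ satisfy the homogeneous equation $Lw=0$ with data read off from $v$ at $x=0$ and $x=1$, the decisive observation is that $-v(0,t)$ and $-v(1,t)$ are smooth functions of $t$ with $\varepsilon$-uniformly bounded derivatives, the points $x=0,1$ lying a fixed distance from $d$. Granted this, the zeroth-order bound $|w_L|\le Ce^{-x/\sqrt\varepsilon}$ follows from the maximum principle using the supersolution $Ce^{-x/\sqrt\varepsilon}e^{\theta t}$ with $\theta\ge 1$ (for which $L$ applied to the barrier equals $b\,(\cdot)\ge 0$), and then absorbing the bounded factor $e^{\theta t}$. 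The bounds on the derivatives up to order four are obtained by the now-standard bootstrap: differentiate the equation, rescale to the stretched variable, and apply the estimates of \cite{ria}, which yields $|\partial^{i+j}w_L|\le C\varepsilon^{-i/2}e^{-x/\sqrt\varepsilon}$ for $0\le i+2j\le 4$; the component $w_R$ is handled identically with $x$ replaced by $1-x$.

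The hard part will not be the mechanics above but verifying that these bounds genuinely hold up to the full fourth order uniformly on $\bar Q$, i.e.\ that $w_L,w_R$ are clean exponential layers with no residual corner singularity at $(0,0)$ or $(1,0)$. This requires confirming that the data $-v(0,t)$ is compatible with the homogeneous initial condition $w_L(x,0)=0$ to the order needed for $w_L\in C^{4+\gamma}(\bar Q)$, which is where the compatibility conditions \eqref{comp0}--\eqref{comp2} and the construction of the interior component $w_I$ enter: they must ensure that the $O(1)$ effect associated with the jump at $d$ is carried entirely by the similarity term $\tfrac12[\phi](d)s(x-d,t)$ and by $w_I$, leaving $v$ compatible at the two space corners. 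This is precisely the feature that separates Problem Class 2 from Problem Class 1, where the layer inherited an incompatibility from $s$ and could be bounded cleanly only to second order, with the $t$-dependent blow-up in its higher derivatives.
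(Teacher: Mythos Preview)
Your proposal is correct and is precisely the adaptation the paper has in mind; its own proof is the single line ``Adapt appropriately the argument from the proof of Theorem 1.'' You have in fact gone further and correctly identified the crucial structural simplification: because the singularity in Problem Class 2 sits at the interior point $x=d$, the boundary layer components here satisfy the \emph{homogeneous} equation $Lw_L=Lw_R=0$ with boundary data $-v(0,t),-v(1,t)$ that are smooth and compatible (via \eqref{comp0}--\eqref{comp2}), so the standard barrier and bootstrap of \cite{ria} yield the clean exponential bounds up to the full fourth order on $\bar Q$, with no $t^{-1}$ blow-up --- the nonsmooth behaviour near $(d,0)$ is absorbed entirely by the interior layer $w_I$, whose bounds are treated separately after the theorem.
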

\begin{proof} Adapt appropriately the argument from the proof of Theorem 1.
\end{proof}

Finally the multi-valued interior layer component
$$
w_I(x,t):=
\begin{cases}
w_I^-, & \hbox{ for } 0\leq x \leq d, \ t \geq 0, \\
w_I^+, & \hbox{ for }  d \leq x \leq 1, \ t \geq 0;
\end{cases}
$$
is defined implicitly by the sum $y=v+w_L+w_R+w_I$.
Hence,
$
\Vert w_I \Vert \leq C.
$
Moreover,  $w_I$ satisfies the problem
\begin{subequations}\label{interior-layer}
\begin{equation}
Lw_I=R(x,t), \ (x,t)\in Q ^- \cup Q^+,
\end{equation}
where
\begin{align}
&R(x,t):= (b(t)-b(0))0.5[\phi](d)\left(e^{-b(0)t}s(x-d,0)-s(x-d,t)\right),\\
& w_I(0,t)=0, \   w_I(1,t)=0, t \geq 0; \quad w_I(x,0)=0, \ 0 \leq x \leq 1; \\
&[w_I](d,t) = -\ve [ v_1](d,t), \qquad [(w_I)_x  ] (d,t)= -\ve [(v_1)_x ](d,t).
\end{align}
\end{subequations}
Note that, $R \in C^{0+\gamma}(\bar Q)$, $R(d,t) \neq 0 $ for all $t >0$ such that $b(t) \neq b(d)$. Moreover,
\[
\vert R (x,t) \vert 
 \leq Ct e^{-\frac{(x-d)^2}{4\ve t}} \leq Ct e^{-\frac{\vert x-d \vert}{2\sqrt{\ve T}}}, \ (x,t) \in \bar Q
\]
and
\[
\left \vert \frac{\partial}{\partial t} R (x,t) \right\vert \leq C e^{-\frac{\mu \vert x-d \vert}{2\sqrt{\ve t}}}, \ \mu < 1,  \  (x,t) \in Q.
\]
Using a maximum principle, either side of $x=d$, we have for $0 \leq i+2j \leq 2$
\begin{subequations}
\begin{align}
 \left\vert  w_{I}(x,t)\right\vert  &\leq C e^{-\frac{ \vert x -d\vert}{2\sqrt{\ve T}}};\quad (x,t) \in \bar Q^- \cup \bar Q^+;\\
 \left\vert \frac{\partial ^{i+j}}{\partial x^{i}\partial t ^{j}} w_{ I}(x,t) \right\vert &\leq C \vr ^{-(i/2 )}e^{-\frac{\mu \vert x -d\vert}{2\sqrt{\ve T}}}, \ \mu < 1,
\quad  (x,t) \in \bar Q^- \cup \bar Q^+.
\end{align}
 For the higher derivatives,  we need to repeat the argument from the proof of Theorem 1, from the  last section, to establish the additional bounds
\begin{align}
\left \vert \frac{\partial ^{i}}{\partial x^{i}} w_{I}(x,t) \right \vert &\leq  \frac{C}{\vr (\sqrt{\vr t})^{i-2} } e^{-\frac{\mu}{2} \frac{\vert x -d\vert}{\sqrt{T\vr}}}, \quad i=3,4, \quad (x,t) \in  Q^- \cup  Q^+; \\
\left \vert \frac{\partial^2}{\partial t^2} w_{I}(x,t)\right \vert &\leq \frac{C}{ t}, \quad (x,t) \in  Q^- \cup  Q^+.
\end{align}
\end{subequations}

\subsection{Problem Class 3: discontinuous boundary data}

Consider  the singularly perturbed parabolic problem
\begin{subequations} \label{CproblemDiscontinuousBC}
\begin{align} \label{CproblemDiscontinuousBC-a}
&Lu =f(x,t) \ \text{in } Q,\qquad  u(1,t)=0, \ t\ge 0, \  u(x,0)=0, \ 0<x<1,
\end{align}
and the boundary condition at $x=0$ is given by
\begin{equation}
u(0,t)=\begin{cases}
\phi_1 (t), & \text{ if }  0\le t \le d, \\
\phi_2 (t), & \text{ if } d<t \le T,
\end{cases}
 \quad \phi_1(d^-) \ne \phi_2(d^+),\quad  \phi _1'(d^-)=  \phi _2'(d^+).
\end{equation}
Note that there is no loss in generality is assuming a homogenous initial condition. We assume that the following compatibility conditions are satisfied at $(0,0)$ and $(1,0)$:
\begin{align}
&\phi _1(0)=0, f(0,0) =\phi _1'(0^+), \ (f_t+f_{xx})(0,0) =\phi _1''(0^+)+b(0)\phi _1'(0^+), \label{comp3}\\
& f(1,0)= (f_t+f_{xx})(1,0) =0,
\end{align}
 and also the following regularity conditions
\begin{align}
& f,b \in C^{4+\gamma }(\bar Q ),  \phi _1 \in C^2(0,d), \phi _2 \in C^2(d,T) .
\end{align}
\end{subequations}
The discontinuous boundary condition on the left, will cause a singularity  to appear in the solution for $t\geq d$.

Decompose the solution of~\eqref{CproblemDiscontinuousBC} into the sum
\begin{equation}\label{decomp3}
u=[\phi](d)H(t-d)cs(x,t-d)+y, \quad [\phi](d):=\phi_2(d^+)- \phi_1(d^-),
\end{equation}
where $H(\cdot) $ is a unit step function defined by
$$
H(x):=
\begin{cases}
0, & \hbox{ for } x <0, \\
1, & \hbox{ for } x \geq 0.
\end{cases}
$$
Note that $cs(x,0)=0$. Observe that $y$ is the solution of the parabolic problem
\begin{subequations} \label{eq:ComponentY2DiscontinuousBC}
\begin{align}
&Ly =f+(b(d)-b(t))[\phi](d)H(t-d)cs(x,t-d)  \ \text{in } Q,\\
& y(1,t)=-[\phi](d)H(t-d)cs(1,t-d), \ t\ge 0, \\
& y(x,0)=\phi(x), \ 0<x<1,
\end{align}
 and
\begin{equation}
y(0,t)=u(0,t) -[\phi](d)H(t-d).
\end{equation}
\end{subequations}
 As in previous sections, we decompose $y$ into three subcomponents
\[
y = v +w_R + w_L,
\]
which are defined as the solutions of the following three parabolic problems.
\begin{subequations} \label{decomposition3}
The regular component satisfies
\begin{align}
&L^*v^* =f^* \ \text{in } Q ^*:=(-a,1+a) \times (0,T],\\
&v^*(-a,t)= v^*(1+a,t)=0, \ t\ge 0, \\
& v^*(x,0)=\phi^*(x), \ -a<x<1+a;
\end{align}
 where $\phi^*(x)$ is a smooth extension of the initial condition~\eqref{CproblemDiscontinuousBC-a}. The  right boundary layer component satisfies
\begin{align}
&Lw_R =0  \ \text{in } Q,\quad w_R(x,0)=0, \ 0<x<1,\\
& w_R(1,t)=y(1,t)-v(1,t), \ w_R(0,t)=0,\ t\ge 0;
\end{align}
 and the left  boundary layer component satisfies
\begin{align}
&Lw_L= Ly-f  \ \text{in } Q,\quad w_L(x,0)=0, \ 0<x<1,\\
& w_L(1,t)=0, \ w_L(0,t) = y(0,t)-v(0,t),  \ t\ge 0.
\end{align}
\end{subequations}
The regular component $v\in C^{4+\gamma}(\bar Q)$  and
since all time derivatives of $cs(1,t-d)$ are zero at $t=d$, we have that $w_R\in C^{4+\gamma}(\bar Q)$.
In addition, $w_L\in C^{2+\gamma}(\bar Q)$.
Hence, the character of the function $y$ for Problem Class 3 is the same as for Problem Class 1. In other words, the bounds on the derivatives of the components of $y$ given in Theorem 1 also apply in the case of Problem Class 3. However, the character of the singular component $u-y$ is different for the two problem classes.

\section{Numerical Method} \label{sec:numericalmethod}

For all three problem classes we employ a classical  finite difference operator (backward Euler in time  and standard central differences in space) on an appropriate mesh (which will be piecewise-uniform in space and uniform in time). The piecewise-uniform Shishkin mesh for each of the three Problem Classes  $n, (n=1,2,3) $ will be denoted by $ \bar Q_n ^{N,M}$. The  numerical method
{\footnote{ The finite difference operators are defined by:
\begin{eqnarray*} D^+_x U(x_i,t _j):= D^-_x U(x_{i+1},t _j);\quad D^-_x U(x_i,t _j):=\frac{U(x_{i},t _j) - U(x_{i-1},t_{j})}{h_i}, \\
D^-_t U(x_i,t _j) := \frac{U(x_i,t _j) - U(x_{i},t_{j-1})}{k_j }, \quad \delta ^2 _x U(x_i,t_j):= \frac{(D^+_x - D^-_x)U(x_{i},t _j)}{\hbar_i}
\end{eqnarray*}
and the mesh steps are $h_i:=x_{i}-x_{i-1}, \hbar_i =(h_{i+1}+h_i)/2,\quad k=k_j:= t_j - t_{j-1}$.}}:
\begin{subequations} \label{discrete-problem}
\begin{eqnarray}
L^{N,M} Y(x_i,t_j) =f(x_i,t_j), \qquad (x_i,t_j)\in  { Q_n^{N,M},} \\
Y(x_i,t_j)=y(x_i,t_j) , \qquad (x_i,t_j)\in { \partial Q _n^{N,M},} \\
\hbox{where} \quad L^{N,M} Y(x_i,t_j) :=  ( -\ve\delta^2_x +b(t_j) I+D^-_t)Y(x_i,t_j).
\end{eqnarray}
\end{subequations}

For Problem Class 1, the Shishkin mesh $\bar Q _1^{N,M}$ is  defined via :
$$
[0,1]=[0,\sigma ]\cup[\sigma,1-\sigma]\cup[1-\sigma,1],
\quad
\sigma :=\min \left\{\frac1{4},\frac{4}{\mu}\sqrt{\vr T} \ln N \right\}, \quad \mu < 1;
$$
and $N/4,N/2,N/4$ grid points are uniformly distributed in each subinterval, respectively.
For Problem Class 3, the Shishkin mesh  is $ \bar Q _3^{N,M} = \bar Q _1^{N,M}$.

For Problem Class 2, the Shishkin mesh  is  $\bar Q _2^{N,M}$, which is defined via
$$
 [0,1]=[0,\tau]\cup[\tau,d-\tau]\cup[d-\tau,d+\tau]\cup[d+\tau,1-\tau]\cup[1-\tau,1],
$$
with
$$
\tau: =\min \left\{\frac1{8},\frac{4}{\mu}\sqrt{\vr T} \ln N \right\}, \quad \mu < 1
$$
and $N/8,N/4,N/4,N/4,N/8$ grid points are uniformly distributed in each subinterval, respectively.
 Although it is required that $\mu<1$ in the theoretical error analysis, in the numerical results section, we simply have taken $\mu =1$.

\begin{thm} \label{th:main}
 Let be  $Y$ the solution of the finite difference scheme~\eqref{discrete-problem} and $y$ the solution of the  continuous problem. Then,   the global approximation $\bar Y$ on $\bar Q$ generated by the values of $Y$ on $\bar Q^{N,M}_n$ and bilinear interpolation, satisfies
\begin{equation} \label{eq:GlobalError}
 \Vert y-\bar Y \Vert _{\bar Q} \leq  (CN^{-2} \ln^2 N  + CM^{-1}) \ln M,
\end{equation}
for each of the three Problem Classes (\ref{Cproblem}), (\ref{CproblemDiscontinuous}) and (\ref{CproblemDiscontinuousBC}).
\end{thm}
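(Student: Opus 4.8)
The plan is to reduce everything to a nodal (pointwise-at-mesh-points) error estimate together with a standard interpolation-error estimate. Writing $\bar y$ for the bilinear interpolant of the nodal values $\{y(x_i,t_j)\}$ of the exact solution, I would split
\[
\Vert y-\bar Y\Vert_{\bar Q}\le \Vert y-\bar y\Vert_{\bar Q}+\Vert \bar y-\bar Y\Vert_{\bar Q}.
\]
Since bilinear interpolation is stable in the maximum norm, $\Vert \bar y-\bar Y\Vert_{\bar Q}\le \max_{(x_i,t_j)}\vert y(x_i,t_j)-Y(x_i,t_j)\vert$, so the main task is the nodal error. Throughout I would use that $L^{N,M}$ is inverse-monotone: central differencing of $-\ve u_{xx}$ together with backward Euler and the nonnegative reaction term $b(t_j)\ge 0$ yields an $M$-matrix, so a discrete comparison principle and discrete barrier functions are available.

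For the nodal error I would exploit the decomposition $y=v+w_L+w_R$ (with the extra interior term $w_I$ for Problem Class 2) established before Theorem 1, and estimate the truncation error $L^{N,M}(y-Y)=L^{N,M}y-Ly$ component by component. For the regular component $v$ the bounds (\ref{vbounds}) give the classical truncation error $O(N^{-2})$ in space and $O(M^{-1})$ in time, controlled by a constant barrier. For the layer components the decay estimates (\ref{expRlayer-bounds})--(\ref{expLlayer-2nd-tbounds}) are used on the two mesh regions separately: on the coarse part the exponential factors $e^{-x/\sqrt{\ve}}$, $e^{-(1-x)/\sqrt{\ve}}$ are already $O(N^{-2})$ by the choice $\sigma=\frac{4}{\mu}\sqrt{\ve T}\ln N$, and a decaying discrete barrier absorbs them; on the fine part, where $h\sim \sqrt{\ve T}\,\ln N/N$, the spatial truncation error of the layer is of size $\ve h^2\,\vert \partial_x^4 w_L\vert$, and here the crucial point is that the fourth-derivative bound (\ref{expLlayer-4th-xbounds}) carries the weak temporal singularity $\ve^{-1}(\sqrt{\ve t})^{-2}$, so this truncation error behaves like $N^{-2}\ln^2 N\,/\,t$ rather than $N^{-2}\ln^2 N$.

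The heart of the argument, and the step I expect to be the main obstacle, is converting these weakly singular (in $t$) truncation errors into the stated bound. Applying the discrete comparison principle, the nodal error at time level $n$ is controlled by a weighted sum $\sum_{j=1}^{n}k\,\vert \tau_j\vert$ of the truncation errors. For $j\ge 2$ the backward-Euler truncation error obeys $\vert \tau_j\vert\le Ck\max_{[t_{j-1},t_j]}\vert w_{tt}\vert\le Ck/t_{j-1}=C/(j-1)$ by (\ref{expLlayer-2nd-tbounds}), so $\sum_{j\ge 2}k\vert\tau_j\vert\le CM^{-1}\sum_{j\ge 1}1/j\le CM^{-1}\ln M$; the apparently non-integrable first step $j=1$ is handled directly by writing $D_t^-w(t_1)-w_t(t_1)=\frac1k\int_0^{t_1}(w_t(s)-w_t(t_1))\,ds$ and using the integrability of $w_{tt}$ against the $t_1$-weight, which gives a bounded $O(1)$ contribution and hence $O(M^{-1})$ after the $k$-weight. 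The identical mechanism applied to the spatial truncation term $\sim N^{-2}\ln^2 N/t_j$ produces the factor $\sum k/t_j\sim \ln M$ and the term $N^{-2}\ln^2 N\,\ln M$. Combining the two accumulations yields the nodal bound $(CN^{-2}\ln^2 N+CM^{-1})\ln M$.

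Finally I would bound the interpolation error $\Vert y-\bar y\Vert$. The spatial part is $O(h^2\Vert\partial_x^2 y\Vert)$, which on the fine mesh is $\ve^{-1}\cdot\ve\ln^2 N/N^2=O(N^{-2}\ln^2 N)$ and on the coarse mesh is $O(N^{-2})$; the temporal part on a cell $[t_{j-1},t_j]$ with $j\ge 2$ is $O(k^2\max\vert w_{tt}\vert)=O(k^2/t_{j-1})=O(k)=O(M^{-1})$, while on the first cell the boundedness of $w_t$ gives $O(k)$ directly. Hence $\Vert y-\bar y\Vert\le C(N^{-2}\ln^2 N+M^{-1})$, no worse than the nodal bound, and adding the two pieces gives (\ref{eq:GlobalError}) for all three problem classes (for Problem Class 2 the interior layer $w_I$ is treated exactly like $w_L$, centred at $x=d$, using its analogous bounds). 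The only genuinely delicate points are the first-time-step estimate and the bookkeeping that turns the $1/t$ and $1/\sqrt{t}$ singularities of the higher derivatives into the single logarithmic factor $\ln M$.
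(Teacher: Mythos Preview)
Your proposal is correct and follows essentially the same route as the paper: both identify the $t^{-1}$ singularity in the higher derivatives of $w_L$ (and of $w_I$ for Problem Class~2), bound the resulting truncation error inside the fine mesh by $C\bigl((N^{-1}\ln N)^2+M^{-1}\bigr)/t_j$, and convert this to a nodal bound via the time-level recursion $|E^j_i|\le k\sum_{n\le j}\max_i|\mathcal T_{i,n}|$ (the paper cites \cite{Zhemukhov1} for this step), with the harmonic-type sum $\sum_{n\le M}n^{-1}$ producing the single $\ln M$ factor. Your handling of the first time step and of the bilinear-interpolation error is actually more explicit than the paper's, which simply cites \cite{fhmos} and \cite{go_dbb} for the passage from nodal to global bounds.

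The one structural difference worth flagging is that the paper first decomposes the \emph{discrete} solution as $Y=V+W_L+W_R$ (plus $W_I$) and disposes of $\|v-V\|$ and $\|w_R-W_R\|$ by the standard barrier arguments of \cite{ria}, applying the time-accumulation trick only to $w_L-W_L$ (and $w_I-W_I$). You instead keep $Y$ intact and decompose only the truncation error; this is equivalent in spirit, but your line ``the bounds (\ref{vbounds}) give the classical truncation error $O(N^{-2})$ \dots\ controlled by a constant barrier'' glosses over the usual Shishkin-mesh subtlety at the transition points, where the non-uniform term $(h_{i+1}-h_i)\,|v_{xxx}|$ is not $O(N^{-2})$ pointwise. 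The paper's componentwise discrete decomposition sidesteps this by invoking \cite{ria} for $V$ and $W_R$ separately, rather than relying on a raw $\max_i$ truncation bound fed into the accumulation.
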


\begin{proof}    For each of the three Problem Classes, the discrete solution $Y$ is decomposed along the same lines as its continuous counterpart $y$.

Let us first consider  Problem  Classes 1 and 3. Using the bounds on the derivatives of the components in Theorem 1, truncation error bounds, discrete maximum principle and a suitable discrete barrier function and following the arguments in \cite{ria}, we can establish the following bounds
\begin{eqnarray*}
\Vert v- V \Vert _{\bar Q^{N,M}}, \Vert w_R- W_R \Vert _{\bar Q^{N,M}} \leq  C N^{-2} \ln^2 N  + CM^{-1}.
\end{eqnarray*}
It remains to bound the error due to the left boundary layer component.  We introduce the following notation for this error and the associated truncation error
\[
E^j_i := (w_L-W_L)(x_i,t_j) \quad \hbox{and} \quad {\cal T}_{i,j}:= L^{N,M} E^j_i.
\]
Note that
\begin{align*}
 \vert \delta _x^2 w_L(x_i,t_j) \vert & \leq C \left\Vert \frac{\partial ^2 w_L}{\partial x^2} \right\Vert _{(x_{i-1},x_{i+1}) \times \{ t_j \} },  \\
 \vert D_t^- w_L(x_i,t_j) \vert  & \leq C \left \Vert \frac{\partial  w_L}{\partial t} \right \Vert _{ \{ x_i \} \times (t_{j-1}, t_j )}.
\end{align*}
Hence, using the bounds (\ref{expLlayer-xbounds}) on the derivatives of $w_L$, we have that outside the left layer
\[
\vert {\cal T}_{i,j} \vert \leq CN^{-2}, \quad x_i \geq \sigma , t_j >0.
\]
Within the left layer, using the bounds (\ref{expLlayer-4th-xbounds}), (\ref{expLlayer-2nd-tbounds}) on the higher derivatives of $w_L$, we have the truncation error bounds
\begin{eqnarray*}
\vert {\cal T}_{i,1} \vert &\leq& C\frac{(N^{-1}\ln N)^2}{t_1} + C \left \Vert \frac{\partial w_L}{\partial t} \right \Vert _{ \{ x_i \} \times (0, t_1 )} \leq  C\frac{(N^{-1}\ln N)^2}{t_1} + C, \quad x_i < \sigma \\
\vert {\cal T}_{i,j} \vert &\leq& C\frac{(N^{-1}\ln N)^2}{t_j}  + C k \left \Vert \frac{\partial^2 w_L}{\partial t^2} \right \Vert _{ \{ x_i \} \times (t_{j-1}, t_j )}  \\
&\leq& C\frac{(N^{-1}\ln N)^2}{t_j} +C\frac{k}{t_{j-1}}, \quad x_i < \sigma , t_j > t_1.
\end{eqnarray*}
Hence, at all time levels, we have  the truncation error bound
\[
\vert {\cal T}_{i,j} \vert \leq C\frac{(N^{-1}\ln N)^2 +M^{-1}}{t_j}, \quad x_i < \sigma , t_j \geq t_1.
\]
We now mimic  the argument in \cite{Zhemukhov1} and note that at each time level,
\[
-\ve \delta ^2_x  E^j_i + \left(b(x_i,t_j) +\frac{1}{k}\right)  E^j_i =  {\cal T}_{i,j} + \frac{1}{k}  E^{j-1}_i, \ t_j >0 .
\]
From this we can deduce the error bound
\begin{eqnarray} \label{ErrorZ}
\vert  E^j_i \vert &\leq& C k \sum _{n=1}^j \vert {\cal T}_{i,n} \vert \leq C \left( (N^{-1}\ln N)^2 +M^{-1}\right) \sum _{n=1}^j \frac{1}{n} \nonumber\\
&\leq&  C \left((N^{-1}\ln N)^2 +M^{-1}\right) \left (1+ \int _{s=1}^{j} \frac{ds}{s} \right) \nonumber\\
&\leq&   C((N^{-1}\ln N)^2 +M^{-1})\ln  (1+j).
\end{eqnarray}
In the case of Problem Class 2, we have an additional interior layer component $w_I$. The bounding of the error $\Vert w_I- W_I \Vert$ follows the same argument as above.

One can extend this nodal error bound to a global error bound by applying the argument in \cite[pp. 56-57]{fhmos} and using the modification in \cite{go_dbb} to manage the initial singularity.
\end{proof}

\section{Numerical Results} \label{sec:numerical}
The orders of convergence of the finite difference scheme~\eqref{discrete-problem} are estimated using the two-mesh principle~\cite{fhmos}. We denote by  $Y^{N,M}$ and $Y^{2N,2M}$ the computed solutions with~\eqref{discrete-problem} on the Shishkin meshes $ Q^{N,M}_n$ and $Q^{2N,2M}_n$, respectively. These solutions are used to computed the maximum two-mesh global differences
$$
D^{N,M}_\ve:= \Vert \bar Y^{N,M}-\bar Y^{2N,2M}\Vert _{ Q^{N,M}_n \cup Q^{2N,2M}_n}
$$
where $\bar Y^{N,M}$ and $\bar Y^{2N,2M}$ denote the bilinear interpolation of the discrete solutions $Y^{N,M}$ and $Y^{2N,2M}$ on the mesh $ Q^{N,M}_n \cup Q^{2N,2M}_n$. Then, the orders of global  convergence $P^{N,M}_\ve$ are estimated in a standard way \cite{fhmos}
$$
P^{N,M}_\ve:=  \log_2\left (\frac{D^{N,M}_\ve}{D^{2N,2M}_\ve} \right).
$$
The uniform  two-mesh global differences $D^{N,M}$ and their corresponding uniform orders of global convergence $P^{N,M}$ are calculated by
$$
D^{N,M}:= \max_{\ve \in S} D^{N,M}_\ve, \quad P^{N,M}:=  \log_2\left (\frac{D^{N,M}}{D^{2N,2M}} \right),
$$
where $S=\{2^0,2^{-1},\ldots,2^{-30}\}$.

In order that the temporal discretization error dominates the spatial discretization error, in all the tables of this paper, except in Table~\ref{tb:NoDBBComponentYexGlobalN=M}, we have taken $N=2^4\times M$.

\subsection{Problem Class 1}

 We present numerical results for two examples from this first class of problems. In the first example the coefficient of the reaction term depends only on the temporal variable;  while in the second example, it depends on the spatial variable. The numerical results computed with the analytical/numerical method of this paper suggest that the method is uniformly and globally convergent in both cases.

\begin{example} \label{Section:Ex1}
Consider   problem (\ref{Cproblem}), with the data given by
\begin{equation}\label{ex}
 b(x,t)= 1+t, \ f(x,t)=4x(1-x)t+t^2, \ \phi(x)=(1-x)^2.
\end{equation}
\end{example}
The maximum two-mesh global differences associated with the component $y$ and the orders of convergence are given in Table~\ref{tb:NoDBBComponentYexGlobal}. Observe that the numerical results  show that the method is  first-order globally parameter-uniformly convergent.   In Table~\ref{tb:NoDBBComponentYexGlobalN=M}, we give the uniform two-mesh global differences taking $N=M$ and the computed orders of convergence illustrate that the method is almost second order convergent; in this case the spatial discretization errors dominate the temporal discretization errors. The numerical results in Tables~\ref{tb:NoDBBComponentYexGlobal} and~\ref{tb:NoDBBComponentYexGlobalN=M} are in agreement with our error estimates in Theorem~\ref{th:main}.
\begin{table}[h]
\caption{Example~\ref{Section:Ex1} from Problem Class 1: Maximum two-mesh global differences and orders of convergence for the function $y$ in~\eqref{eq:ComponentY2} using a piecewise uniform Shishkin mesh}
\begin{center}{\tiny \label{tb:NoDBBComponentYexGlobal}
\begin{tabular}{|c||c|c|c|c|c|}
 \hline  & N=256,M=16 & N=512,M=32 & N=1024,M=64 & N=2048,M=128 & N=4096,M=256 \\
\hline \hline  $\vr=2^{0}$
 &{\bf 1.295E-02} &{\bf 6.990E-03} &{\bf 3.650E-03 }&{\bf 1.870E-03 }&{\bf 9.453E-04} \\
&0.890&0.938&0.965&0.984&
\\ \hline $\vr=2^{-2}$
&3.789E-03 &1.980E-03 &1.013E-03 &5.128E-04 &2.580E-04 \\
&0.936&0.966&0.983&0.991&
\\ \hline $\vr=2^{-4}$
&2.214E-03 &1.081E-03 &5.339E-04 &2.653E-04 &1.322E-04 \\
&1.035&1.017&1.009&1.005&
\\ \hline $\vr=2^{-6}$
&4.216E-03 &2.083E-03 &1.035E-03 &5.161E-04 &2.577E-04 \\
&1.017&1.008&1.004&1.002&
\\ \hline $\vr=2^{-8}$
&4.971E-03 &2.456E-03 &1.220E-03 &6.084E-04 &3.037E-04 \\
&1.017&1.009&1.004&1.002&
\\ \hline $\vr=2^{-10}$
&5.269E-03 &2.598E-03 &1.290E-03 &6.428E-04 &3.208E-04 \\
&1.020&1.010&1.005&1.003&
\\ \hline $\vr=2^{-12}$
&6.402E-03 &2.668E-03 &1.321E-03 &6.569E-04 &3.276E-04 \\
&1.263&1.015&1.008&1.004&
\\ \hline $\vr=2^{-14}$
&1.092E-02 &4.011E-03 &1.342E-03 &6.655E-04 &3.313E-04 \\
&1.445&1.580&1.012&1.006&
\\ \hline $\vr=2^{-16}$
&1.092E-02 &4.013E-03 &1.347E-03 &6.685E-04 &3.326E-04 \\
&1.445&1.575&1.011&1.007&
\\ \hline .&.&.&.&.&.\\.&.&.&.&.&.\\.&.&.&.&.&.
\\ \hline $\vr=2^{-30}$
&1.093E-02 &4.014E-03 &1.352E-03 &6.707E-04 &3.337E-04 \\
&1.445&1.571&1.011&1.007&
\\ \hline $D^{N,M}$
&1.295E-02 &6.990E-03 &3.650E-03 &1.870E-03 &9.453E-04 \\
$P^{N,M}$ &0.890&0.938&0.965&0.984&\\ \hline \hline
\end{tabular}}
\end{center}
\end{table}

\begin{table}[h]
\caption{ Example~\ref{Section:Ex1} from Problem Class 1: Uniform two-mesh global differences and orders of convergence for the function $y$ in~\eqref{eq:ComponentY2} using a piecewise uniform Shishkin mesh with $N=M$}
\begin{center}{\tiny \label{tb:NoDBBComponentYexGlobalN=M}
\begin{tabular}{|c||c|c|c|c|c|}
 \hline  & N=M=64 & N=M=128 & N=M=256 & N=M=512 & N=M=1024 \\
\hline $D^{N,M}$
&4.972E-02 &2.548E-02 &1.117E-02 &3.983E-03 &1.330E-03 \\
$P^{N,M}$ &0.964&1.189&1.488&1.583&\\ \hline \hline
\end{tabular}}
\end{center}
\end{table}

We display  in Figure~\ref{fig:NoDBBY-bl} 
 the numerical approximation to the function $y$ defined in~\eqref{eq:ComponentY2},  which exhibits only boundary layers.
The numerical solution to problem~\eqref{Cproblem}-\eqref{ex} is displayed in Figure~\ref{fig:NoDBBU-bl}, which exhibits  both  boundary layers and the singularity caused by the incompatibility between the  initial and boundary conditions.
  \begin{figure}[h!]
\centering
\resizebox{\linewidth}{!}{
	\begin{subfigure}[Entire domain $\bar Q$]{
		\includegraphics[scale=0.5, angle=0]{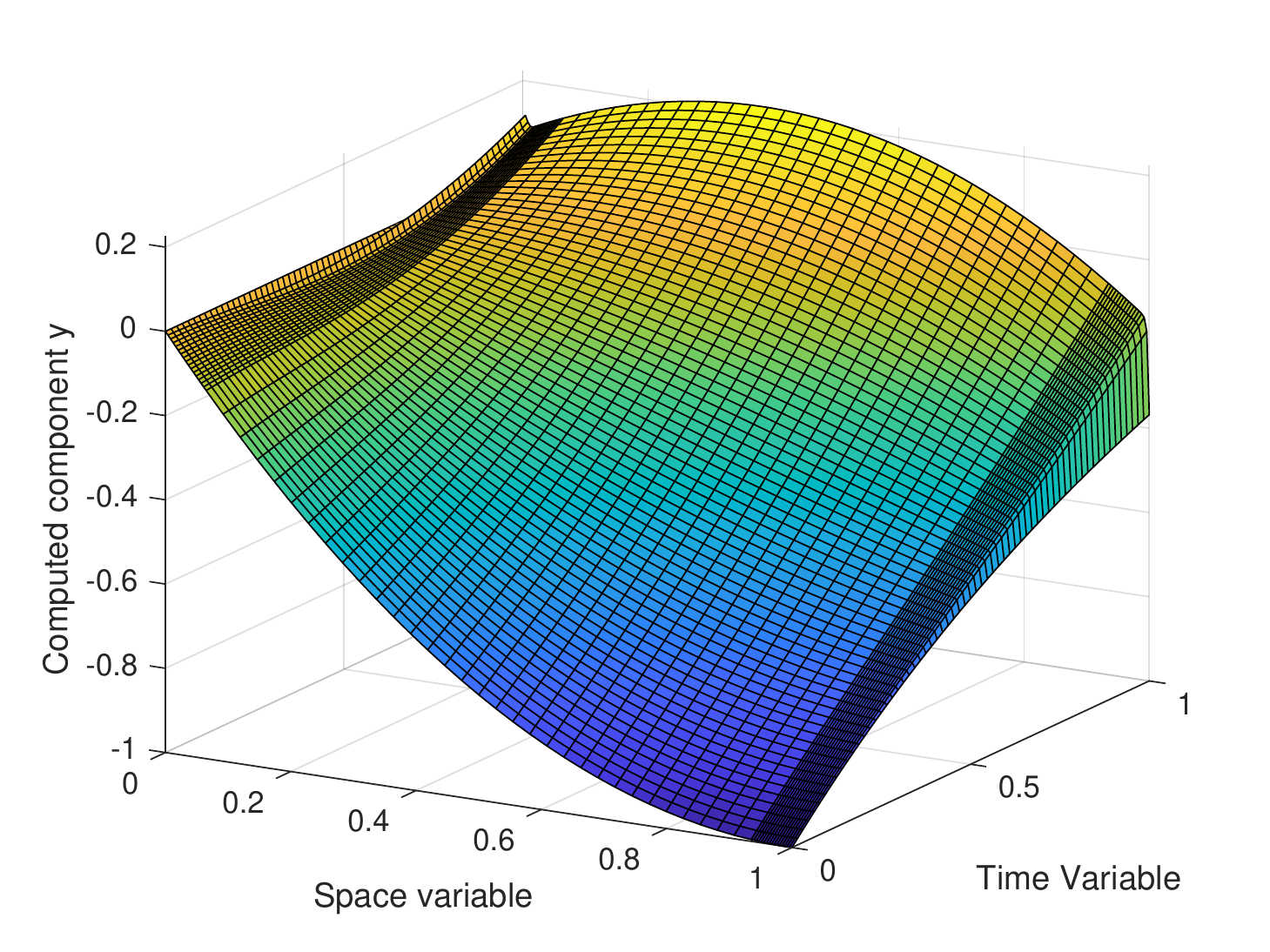}
		}
    \end{subfigure}
\begin{subfigure}[A zoom in on the corner $(0,0)$]{
		\includegraphics[scale=0.5, angle=0]{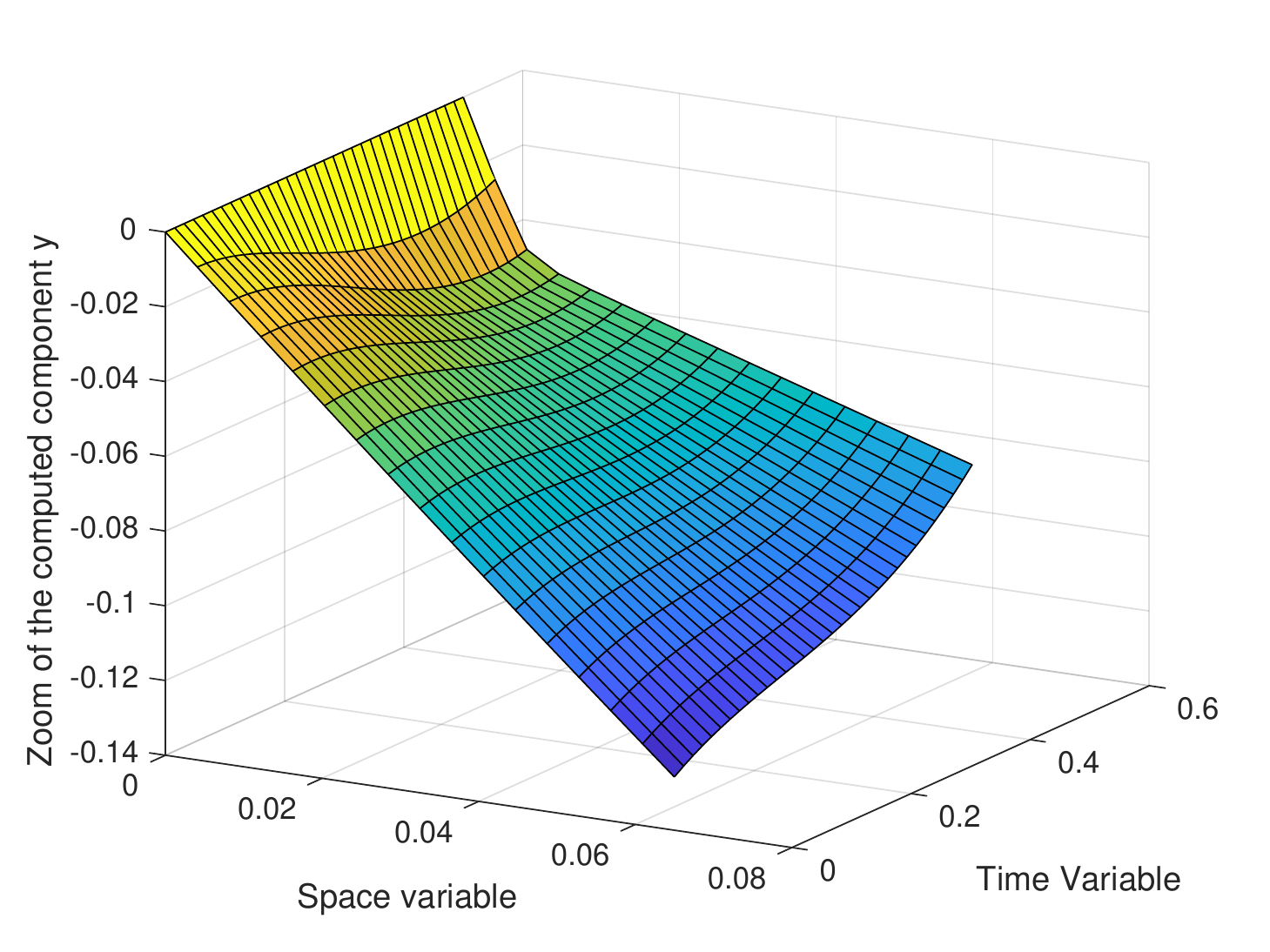}
		}
	\end{subfigure}
}
	\caption{ Example~\ref{Section:Ex1} from Problem Class 1: The numerical approximation to $y$ with $\vr =2^{-16}$ and $N=M=64$}
	\label{fig:NoDBBY-bl}
 \end{figure}

 \begin{figure}[h!]
\centering
\resizebox{\linewidth}{!}{
	\begin{subfigure}[Entire domain $\bar Q$]{
		\includegraphics[scale=0.5, angle=0]{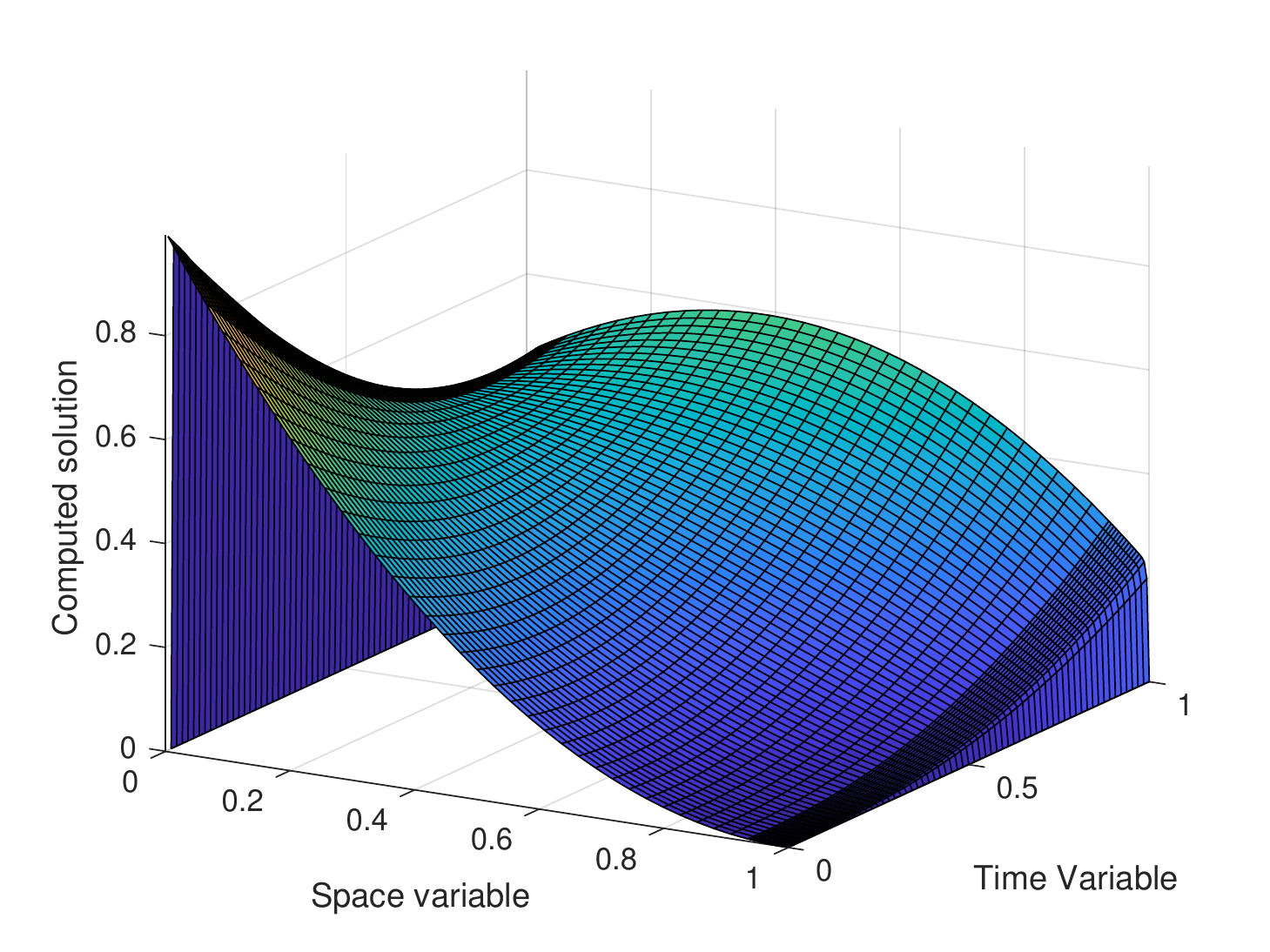}
		}
    \end{subfigure}
\begin{subfigure}[A zoom in on the corner $(0,0)$]{
		\includegraphics[scale=0.5, angle=0]{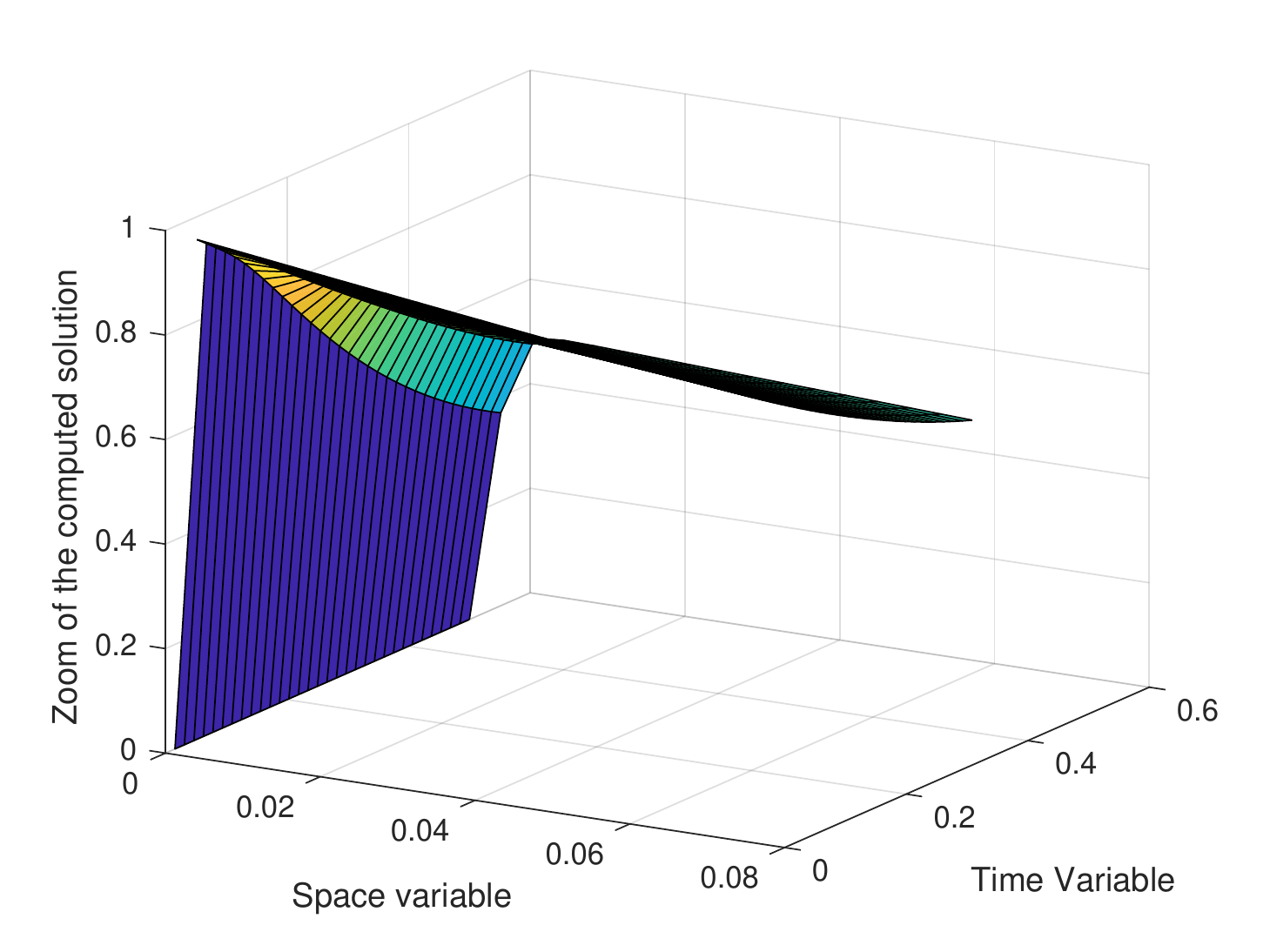}
		}
	\end{subfigure}
}
	\caption{ Example~\ref{Section:Ex1} from Problem Class 1: The  approximation $ s+Y$ to the solution $u$ with $\vr =2^{-16}$ and $N=M=64$}
	\label{fig:NoDBBU-bl}
 \end{figure}

\begin{example} \label{Section:Ex2}
Consider  problem (\ref{Cproblem}), with the data given by
\begin{equation}\label{ex2}
b(x,t)= 1+10x, \quad f(x,t)=4x(1-x)t+t^2, \quad \phi(x)=(1-x)^2.
\end{equation}
\end{example}
The maximum two-mesh global differences associated with the component $y$ and the orders of convergence are given in Table~\ref{tb:NoDBBComponentYex2Global}. Observe that the numerical results indicate that the method is globally  parameter-uniformly convergent. Comparing these orders of convergence with those in Table~\ref{tb:NoDBBComponentYexGlobal}, we see that the theoretical assumption of $b$ being independent of the space variable appears to be not necessary in order to observe parameter-uniform convergence.

\begin{table}[h]
\caption{ Example~\ref{Section:Ex2} from Problem Class 1: Maximum two-mesh global differences and orders of convergence for the function $y$ in~\eqref{eq:ComponentY2} using a piecewise uniform Shishkin mesh}
\begin{center}{\tiny \label{tb:NoDBBComponentYex2Global}
\begin{tabular}{|c||c|c|c|c|c|}
 \hline  & N=256,M=16 & N=512,M=32 & N=1024,M=64 & N=2048,M=128 & N=4096,M=256 \\
\hline \hline  $\vr=2^{0}$
&4.837E-03 &4.267E-03 &2.321E-03 &1.160E-03 &5.823E-04 \\
&0.181&0.878&1.000&0.995&
\\ \hline $\vr=2^{-2}$
&9.114E-03 &4.665E-03 &2.341E-03 &1.172E-03 &5.863E-04 \\
&0.966&0.994&0.998&0.999&
\\ \hline $\vr=2^{-4}$
&1.086E-02 &5.523E-03 &{\bf 2.787E-03} &{\bf 1.400E-03} &{\bf 7.016E-04} \\
&0.975&0.987&0.993&0.997&
\\ \hline $\vr=2^{-5}$
&{\bf 1.092E-02} &{\bf 5.531E-03} &2.784E-03 &1.398E-03 &7.006E-04 \\
&0.982&0.990&0.994&0.997&
\\ \hline $\vr=2^{-6}$
&1.068E-02 &5.387E-03 &2.712E-03 &1.361E-03 &6.814E-04 \\
&0.988&0.990&0.995&0.998&
\\ \hline $\vr=2^{-8}$
&1.047E-02 &5.305E-03 &2.672E-03 &1.341E-03 &6.717E-04 \\
&0.980&0.990&0.995&0.997&
\\ \hline $\vr=2^{-10}$
&1.056E-02 &5.349E-03 &2.693E-03 &1.351E-03 &6.769E-04 \\
&0.982&0.990&0.995&0.997&
\\ \hline $\vr=2^{-12}$
&1.059E-02 &5.361E-03 &2.698E-03 &1.354E-03 &6.782E-04 \\
&0.982&0.990&0.995&0.997&
\\ \hline $\vr=2^{-14}$
&1.060E-02 &5.365E-03 &2.700E-03 &1.355E-03 &6.786E-04 \\
&0.982&0.991&0.995&0.997&
\\ \hline $\vr=2^{-16}$
&1.061E-02 &5.368E-03 &2.701E-03 &1.355E-03 &6.787E-04 \\
&0.983&0.991&0.995&0.997&
\\ \hline .&.&.&.&.&.\\.&.&.&.&.&.\\.&.&.&.&.&.
\\ \hline $\vr=2^{-30}$
&1.062E-02 &5.371E-03 &2.702E-03 &1.355E-03 &6.788E-04 \\
&0.984&0.991&0.995&0.998&
\\ \hline $D^{N,M}$
&1.092E-02 &5.531E-03 &2.787E-03 &1.400E-03 &7.016E-04 \\
$P^{N,M}$ &0.982&0.989&0.993&0.997&\\ \hline \hline
\end{tabular}}
\end{center}
\end{table}

\subsection{Problem Class 2}

\begin{example} \label{ex:exDiscontinuoust}
Consider   problem (\ref{CproblemDiscontinuous}) with the data given by
\begin{subequations} \label{exDiscontinuoust}
\begin{align}
&  b(x,t)= 1+10xt, \quad f(x,t)=4 x (1-x)t+t^2, \\
&  \phi(x)=
\begin{cases}
-1+(2x-1)^2, & \quad \text{ if } \quad   0\le x \le 0.5, \\
1-(2x-1)^2, & \quad \text{ if } \quad  0.5<x \le 1.
\end{cases}
\end{align}
\end{subequations}
Observe that in  this example the coefficient  $b$ depends on the temporal and spatial variables and, moreover, $\phi '(0.5^+) = \phi '(0.5^-)$ but $\phi ''(0.5^+) \neq \phi ''(0.5^-)$. The schemes considered here to approximate the solution of this example are  defined on the Shishkin mesh  $ \bar Q _2^{N,M}$.
\end{example}
 If the singularity is not stripped off and Example~\ref{ex:exDiscontinuoust} is simply solved with backward Euler method and standard central finite differences on the Shishkin mesh $\bar{Q}^{N,M}_2$, the method is not globally convergent for any value of $\vr$. This is illustrated in Table~\ref{tb:DiscontinuousUtGlobal-NoDecomp} where the uniform two-mesh global differences are given.

\begin{table}[h]
\caption{Example~\ref{ex:exDiscontinuoust} from Problem Class 2: Maximum two-mesh global differences and orders of convergence for $u$ using a piecewise uniform Shishkin mesh, without separating off the singularity}
\begin{center}{\tiny \label{tb:DiscontinuousUtGlobal-NoDecomp}
\begin{tabular}{|c||c|c|c|c|c|}
 \hline  & N=256,M=16 & N=512,M=32 & N=1024,M=64 & N=2048,M=128 & N=4096,M=256
\\ \hline $D^{N,M}$
&6.698E-01 &5.707E-01 &4.992E-01 &4.994E-01 &4.996E-01 \\
$P^{N,M}$ &0.231&0.193&-0.001&-0.001&\\ \hline \hline
\end{tabular}}
\end{center}
\end{table}

 We show now the numerical results when the singularity is stripped off. The maximum two-mesh global differences associated with the component $y$ and the orders of convergence are given in Table~\ref{tb:DiscontinuousYtGlobal}. Observe that the numerical results indicate that the method is globally  and uniformly convergent.

\begin{table}[h]
\caption{Example~\ref{ex:exDiscontinuoust} from Problem Class 2: Maximum two-mesh global differences and orders of convergence for the function $y$
in~\eqref{eq:ComponentY2Discontinuous} using a piecewise uniform Shishkin mesh}
\begin{center}{\tiny \label{tb:DiscontinuousYtGlobal}
\begin{tabular}{|c||c|c|c|c|c|}
 \hline  & N=256,M=16 & N=512,M=32 & N=1024,M=64 & N=2048,M=128 & N=4096,M=256 \\
\hline \hline  $\vr=2^{0}$
&1.683E-02 &8.549E-03 &4.277E-03 &{\bf 2.134E-03} &{\bf 1.066E-03} \\
&0.978&0.999&1.003&1.001&
\\ \hline $\vr=2^{-2}$
&6.557E-03 &3.177E-03 &1.563E-03 &7.741E-04 &3.852E-04 \\
&1.045&1.024&1.014&1.007&
\\ \hline $\vr=2^{-4}$
&5.748E-03 &2.992E-03 &1.527E-03 &7.717E-04 &3.879E-04 \\
&0.942&0.970&0.985&0.992&
\\ \hline $\vr=2^{-6}$
&8.330E-03 &4.346E-03 &2.220E-03 &1.122E-03 &5.642E-04 \\
&0.939&0.969&0.984&0.992&
\\ \hline $\vr=2^{-8}$
&9.535E-03 &4.981E-03 &2.546E-03 &1.287E-03 &6.469E-04 \\
&0.937&0.968&0.984&0.992&
\\ \hline $\vr=2^{-10}$
&1.128E-02 &5.618E-03 &2.804E-03 &1.401E-03 &6.999E-04 \\
&1.005&1.003&1.001&1.001&
\\ \hline $\vr=2^{-12}$
&1.245E-02 &6.212E-03 &3.103E-03 &1.551E-03 &7.751E-04 \\
&1.003&1.001&1.001&1.000&
\\ \hline $\vr=2^{-14}$
&1.880E-02 &6.559E-03 &3.278E-03 &1.639E-03 &8.194E-04 \\
&1.519&1.000&1.000&1.000&
\\ \hline $\vr=2^{-15}$
&{\bf 3.134E-02} &1.104E-02 &3.335E-03 &1.667E-03 &8.338E-04 \\
&1.505&1.727&1.000&1.000&
\\ \hline $\vr=2^{-16}$
&2.964E-02 &{\bf 1.266E-02} &{\bf 4.588E-03 }&1.689E-03 &8.445E-04 \\
&1.228&1.464&1.442&1.000&
 \\ \hline .&.&.&.&.&.\\.&.&.&.&.&.\\.&.&.&.&.&.
\\ \hline $\vr=2^{-30}$
&2.957E-02 &1.264E-02 &4.584E-03 &1.752E-03 &8.755E-04 \\
&1.226&1.463&1.388&1.001&
\\ \hline $D^{N,M}$
&3.134E-02 &1.266E-02 &4.588E-03 &2.134E-03 &1.066E-03 \\
$P^{N,M}$ &1.308&1.464&1.104&1.001&\\ \hline \hline
\end{tabular}}
\end{center}
\end{table}

 Figure~\ref{fig:DiscontinuousYt} 
displays both the numerical approximation to the function $y$ defined in~\eqref{eq:ComponentY2Discontinuous}
and the numerical solution to  problem~\eqref{CproblemDiscontinuous} and~\eqref{exDiscontinuoust} is displayed in Figure~\ref{fig:DiscontinuousYt}. The presence of an interior layer is evident in both figures. 

 \begin{figure}[h!]
\centering
\resizebox{\linewidth}{!}{
	\begin{subfigure}[The computed $Y$]{
		\includegraphics[scale=0.5, angle=0]{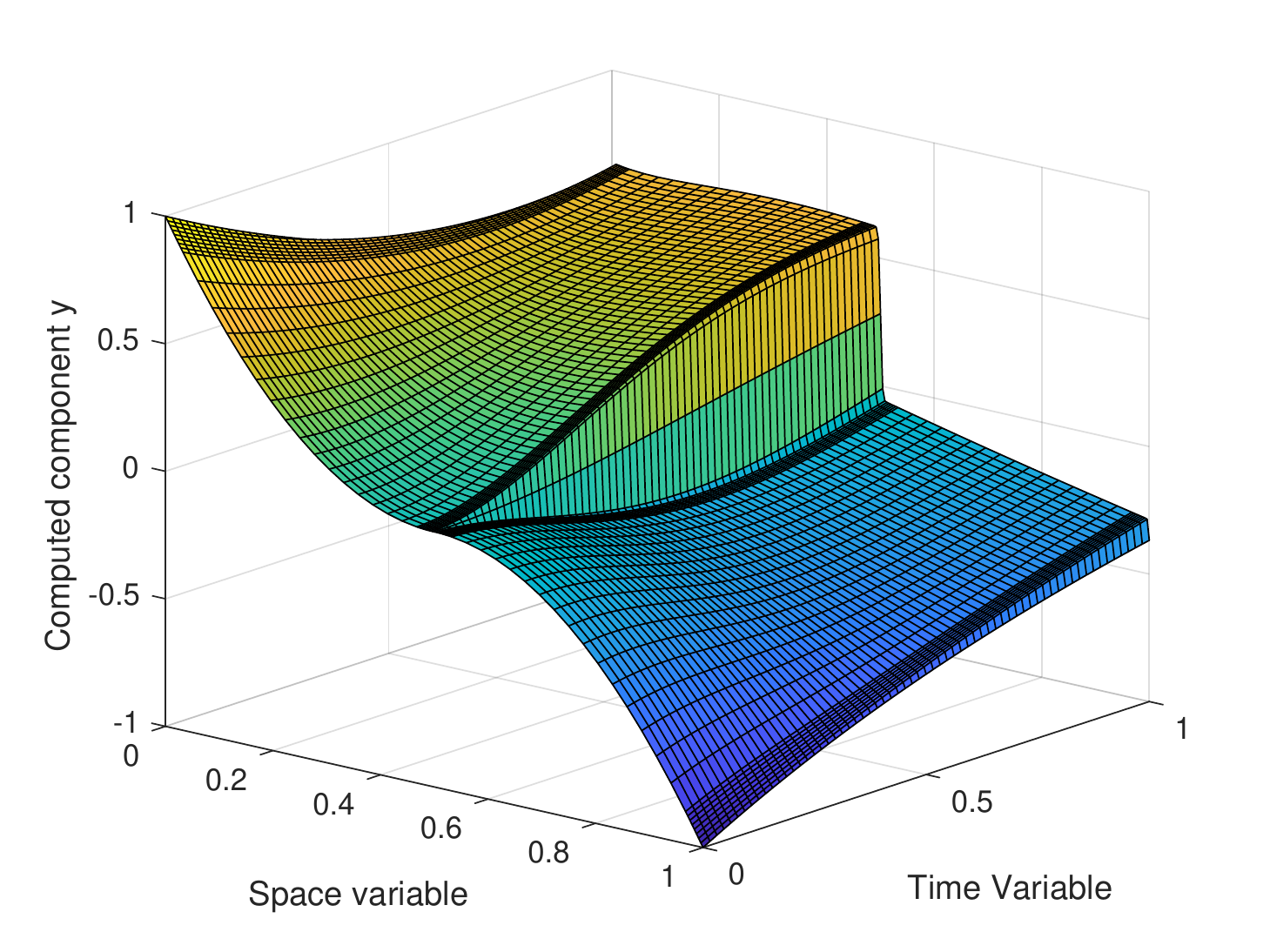}
		}
    \end{subfigure}
\begin{subfigure}[The  approximation $ s+Y$]{
		\includegraphics[scale=0.5, angle=0]{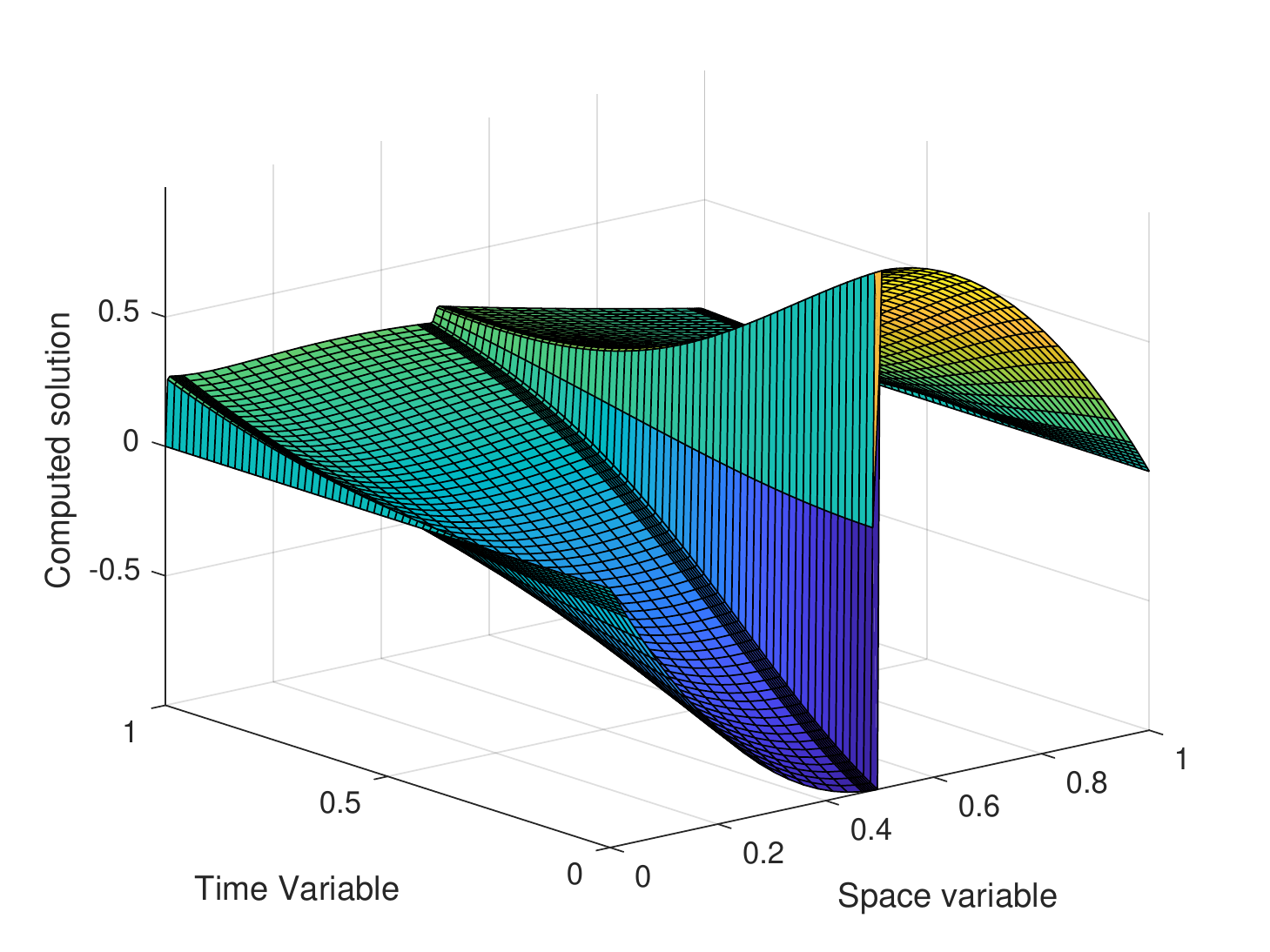}
		}
	\end{subfigure}
}
	\caption{Example~\ref{ex:exDiscontinuoust} from Problem Class 2: The numerical approximation to $y$ and the approximation  $s+ Y$ to the solution $u$, with $\vr =2^{-16}$ and $N=M=64$}
	\label{fig:DiscontinuousYt}
 \end{figure}

\subsection{Problem Class 3}

\begin{example} \label{ex:exDiscontinuousBC}
Consider  the problem (\ref{CproblemDiscontinuousBC}), with the data taken to be
\begin{equation}\label{exDiscontinuousBC}
b(x,t)= 1+x, \quad f(x,t)=4(1+x)(1-x)t+t^2,
\end{equation}
and
$$
u(0,t)=\begin{cases}
0,   & \text{ if } \  0\le t \le 0.25, \\
0.5, & \text{ if } \  0.25<t \le 1.
\end{cases}
$$
 Observe that in this example the function $b=b(x)$. The schemes considered here to approximate the solution  are  defined on the Shishkin mesh  $ \bar Q _3^{N,M}$.
\end{example}

Once again, we first confirm the need to use our analytical/numerical approach to approximate the Problem Class 3. If we use backward Euler method and standard central finite differences on the Shishkin mesh $\bar{Q}^{N,M}_3$ to approximate  Example~\ref{ex:exDiscontinuousBC} without separating off the singularity, it is not globally convergent for any value of $\vr$. By way of illustration, the uniform two-mesh global differences are given in Table~\ref{tb:DiscontinuousBCUGlobal-NoDecomp}.

\begin{table}[h]
\caption{Example~\ref{ex:exDiscontinuousBC} from Problem Class 3: Maximum two-mesh global differences and orders of convergence for $u$ using a piecewise uniform Shishkin mesh, without separating off the singularity}
\begin{center}{\tiny \label{tb:DiscontinuousBCUGlobal-NoDecomp}
\begin{tabular}{|c||c|c|c|c|c|}
 \hline  & N=256,M=16 & N=512,M=32 & N=1024,M=64 & N=2048,M=128 & N=4096,M=256
\\ \hline $D^{N,M}$
&2.500E-01 &2.500E-01 &2.500E-01 &2.500E-01 &2.500E-01 \\
$P^{N,M}$ &0.000&0.000&0.000&0.000&\\ \hline \hline
\end{tabular}}
\end{center}
\end{table}

 We show now the numerical results when the singularity is stripped off. The maximum two-mesh global differences associated with the component $y$ and the orders of convergence are given in Table~\ref{tb:DiscontinuousYGlobalBC}. Observe that the numerical results indicate that the method is globally parameter-uniformly convergent.
\begin{table}[h]
\caption{Example~\ref{ex:exDiscontinuousBC} from Problem Class 3: Maximum two-mesh global differences and orders of convergence for the function $y$ in~\eqref{eq:ComponentY2DiscontinuousBC} using a piecewise uniform Shishkin mesh}
\begin{center}{\tiny \label{tb:DiscontinuousYGlobalBC}
\begin{tabular}{|c||c|c|c|c|c|}
 \hline  & N=256,M=16 & N=512,M=32 & N=1024,M=64 & N=2048,M=128 & N=4096,M=256 \\
\hline \hline  $\vr=2^{0}$
&2.765E-03 &1.478E-03 &7.901E-04 &4.057E-04 &2.058E-04 \\
&0.903&0.904&0.962&0.979&
\\ \hline $\vr=2^{-2}$
&4.421E-03 &2.250E-03 &1.136E-03 &5.714E-04 &2.866E-04 \\
&0.975&0.985&0.992&0.996&
\\ \hline $\vr=2^{-4}$
&7.926E-03 &3.909E-03 &1.940E-03 &9.664E-04 &4.823E-04 \\
&1.020&1.010&1.005&1.003&
\\ \hline $\vr=2^{-6}$
&1.022E-02 &5.079E-03 &2.532E-03 &1.264E-03 &6.315E-04 \\
&1.009&1.004&1.002&1.001&
\\ \hline $\vr=2^{-8}$
&1.050E-02 &5.214E-03 &2.598E-03 &1.297E-03 &6.478E-04 \\
&1.010&1.005&1.003&1.001&
\\ \hline $\vr=2^{-10}$
&1.059E-02 &5.260E-03 &2.621E-03 &1.308E-03 &6.535E-04 \\
&1.010&1.005&1.003&1.001&
\\ \hline $\vr=2^{-12}$
&1.062E-02 &5.275E-03 &2.628E-03 &1.312E-03 &6.553E-04 \\
&1.010&1.005&1.003&1.001&
\\ \hline $\vr=2^{-14}$
&1.063E-02 &5.278E-03 &2.630E-03 &1.313E-03 &6.558E-04 \\
&1.010&1.005&1.003&1.001&
\\ \hline $\vr=2^{-16}$
&1.063E-02 &5.279E-03 &2.630E-03 &1.313E-03 &6.559E-04 \\
&1.009&1.005&1.003&1.001&
 \\ \hline .&.&.&.&.&.\\.&.&.&.&.&.\\.&.&.&.&.&.
\\ \hline $\vr=2^{-30}$
&{\bf 1.063E-02} &{\bf 5.279E-03} &{\bf 2.631E-03} &{\bf 1.313E-03} &{\bf 6.559E-04} \\
&1.010&1.005&1.003&1.001&
\\ \hline $D^{N,M}$
&1.063E-02 &5.280E-03 &2.631E-03 &1.313E-03 &6.559E-04 \\
$P^{N,M}$ &1.010&1.005&1.003&1.001&\\ \hline \hline
\end{tabular}}
\end{center}
\end{table}
 Figure~\ref{fig:DiscontinuousYBC} 
displays the numerical approximation to the function $y$ defined in~\eqref{eq:ComponentY2DiscontinuousBC} and the approximation to the solution of  problem~\eqref{CproblemDiscontinuousBC} and \eqref{exDiscontinuousBC}.  Thin  boundary layer regions near $x=0$ and $x=1$ are visible in both plots, while large time derivatives near $t=0.25$ are visible only in the plot of the approximation $s+Y$.

 \begin{figure}[h!]
\centering
\resizebox{\linewidth}{!}{
	\begin{subfigure}[The computed $Y$]{
		\includegraphics[scale=0.5, angle=0]{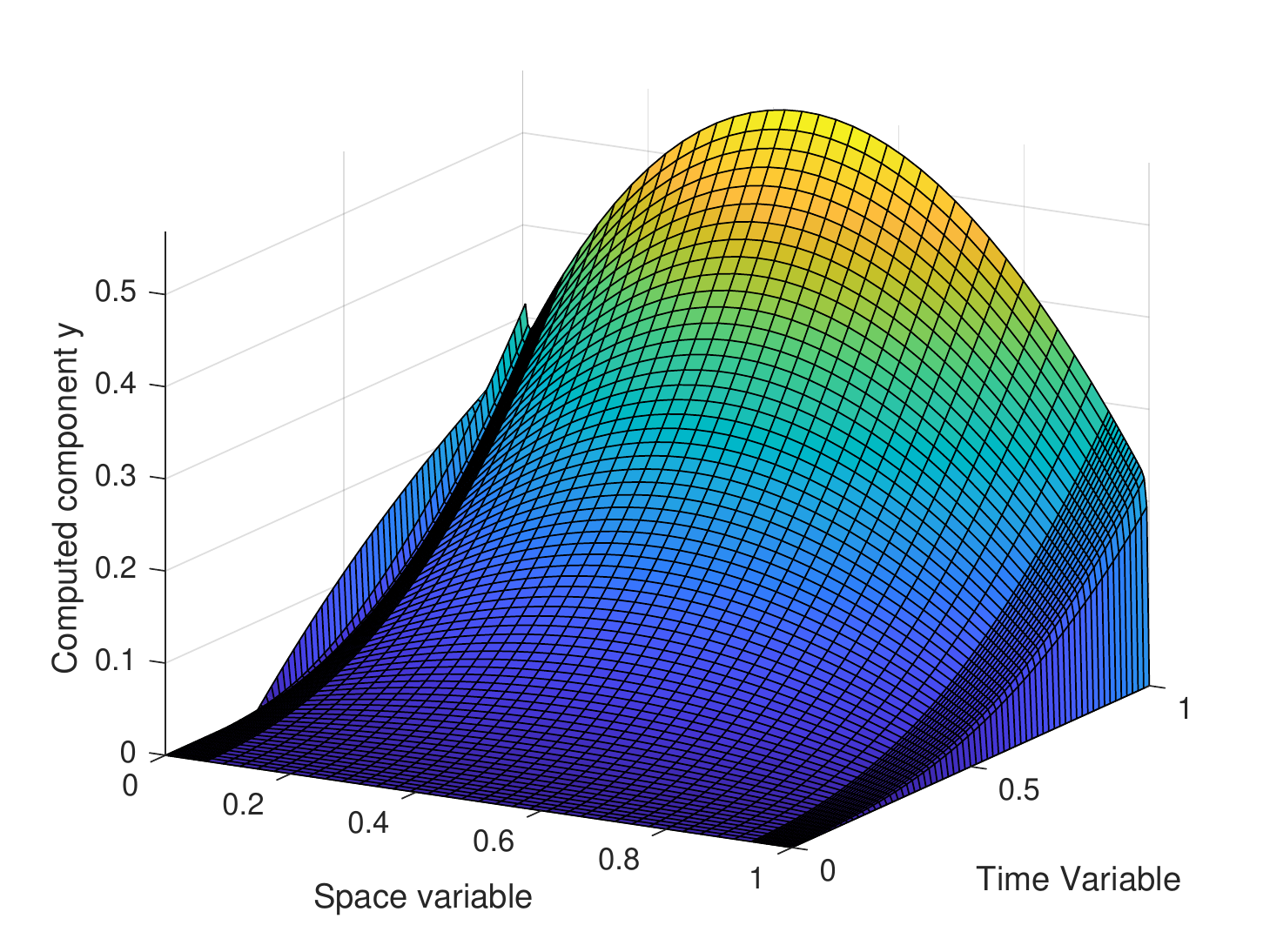}
		}
    \end{subfigure}
\begin{subfigure}[The  approximation $ s+Y$]{
		\includegraphics[scale=0.5, angle=0]{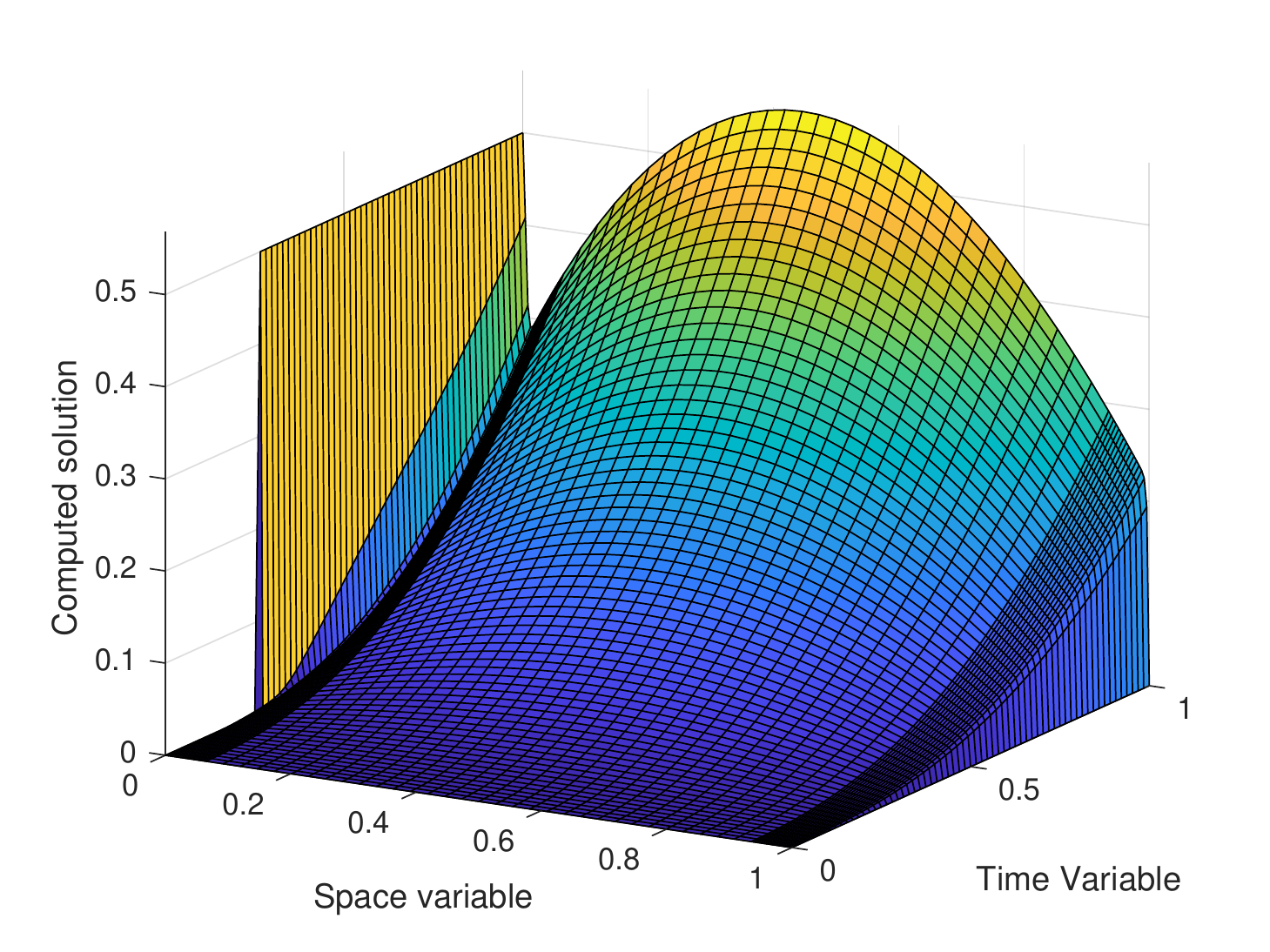}
		}
	\end{subfigure}
}
	\caption{Example~\ref{ex:exDiscontinuousBC} from Problem Class 3: The numerical approximation to $y$ and the approximation  $s+ Y$ to the solution $u$, with $\vr =2^{-16}$ and $N=M=64$}
	\label{fig:DiscontinuousYBC}
 \end{figure}

\section*{Appendix 1: Compatibility conditions}

Below we place certain regularity and compatibility restrictions on the data of the problem
\begin{subequations} \label{Classical}
\begin{align}
&  Lu:=u_t-\ve u_{xx}+b(x,t)u=f(x,t), \quad (x,t) \in  Q,\\
&u(0,t)=g_L(t), \ u(1,t)=g_R(t) \ t\ge 0, \quad u(x,0)=\phi(x), \ 0<x<1,
\end{align}
\end{subequations}
 in order that the solution   $u \in C^{4+\gamma} (\overline Q)$.
Compatibility conditions at the zero-order level  correspond to:
\begin{subequations}\label{compatibility}
\begin{equation}\label{zer0}
\phi(0^+) =g_L(0)  \quad \hbox{and} \quad \phi(1^-) =g_R(0).
\end{equation}
Assuming (\ref{zer0}), we can write $u= \Phi (x,t) + z, \ (x,t) \in  Q$ where
\begin{eqnarray*}
 \Phi (x,t) :=  \phi (x)  + (1-x)(g_L(t)-g_L (0)) + x
\left (g_R(t)-g_R (0) \right);
\end{eqnarray*}
 $ Lz = f - L \Phi;$ and $ z(x,t)=0, \ (x,t) \in \partial Q$. Note that
\begin{eqnarray*}
 L\Phi =  (1-x)g_L'(t) +x g'_R(t)  -\ve\phi''(x) +b \Phi .
\end{eqnarray*}
From \cite{ladyz}, if  $b,f, L\Phi \in C^{0+\gamma}(\bar Q)$ and the  first-order compatibility conditions
\begin{eqnarray}
  (g_R'(0)-\ve\phi''(1^-))+b(1,0) \phi( 1^-)= f(1,0), \label{first-x=1}\\  (g'_L(0)-\ve\phi ''(0^+))+b(0,0)\phi (0^+) = f(0,0), \label{first-x=0}
\end{eqnarray}
are satisfied, then $u \in C^{2+\gamma}(\bar Q)$.
 If  $b,f, L\Phi \in C^{2+\gamma}(\bar Q)$ and  we further assume  second-order compatibility (so that the mixed derivative $z_{xxt}$ is well defined at $(0,0)$ and $(1,0)$), such that
\begin{eqnarray}
 (f-L\Phi)_t(0,0^+) + (f-L\Phi)_{xx}(0^+,0)=0; \label{second-at-0}\\
  (f-L\Phi)_t(1,0^+) + (f-L\Phi)_{xx}(1^-,0)=0, \label{second-at-1}
\end{eqnarray}
then the solution of (\ref{Classical})  satisfies $u \in C^{4+\gamma}(\bar Q)$.
\end{subequations}
\newpage 

\section*{Appendix 2: Properties of $s_2(x,t)$}
 Recall that
$$
s_2(x,t):=t^2e^{-b(0) t}\erf\left(\frac{x}{2\sqrt{\vr t}} \right).
$$
\begin{subequations}\label{bounds-on-S}
Using the inequality
$
t^pe^{-t} \leq C_{p,\mu} e^{-\mu t }, \ t \in [0,\infty),\  \mu < 1, p >0;
$
we have the following bounds, for all $(x,t) \in \bar Q$,
\begin{eqnarray}
 \left \Vert  s_2 \right\Vert &\leq& C;\\
 \left \vert  \frac{\partial s_2 }{\partial t}  (x,t) \right\vert+ \ve \left \vert   \frac{\partial ^2 s_2}{\partial x^2}  (x,t) \right\vert &\leq& C\frac{xt}{\sqrt{\ve t}} e^{-\frac{x^2}{4\ve t}}\leq Ct e^{-\mu\frac{x}{\sqrt{4\ve T}}}; \\
\vr^2 \left \vert \frac{\partial ^4 s_2}{\partial x^4}  (x,t) \right \vert
&=& \vr^2  \frac{e^{-b(0)t}}{2}
 \frac{\sqrt{t}}{\ve \sqrt{\ve \pi}} \left \vert \frac{3x}{2\ve t}-\frac{x^3}{4(\ve t)^2} \right \vert e^{-\frac{x^2}{4\ve t}} \nonumber\\
&\le & Ce^{-\mu \frac{x}{\sqrt{4\ve T}}},
\end{eqnarray}
 and
\begin{align*}
\frac{\partial ^2 s_2 }{\partial t^2}  (x,t)
&= e^{-b(0)t}\frac{1}{4\sqrt{\pi}} \left( \frac{3x}{\sqrt{\ve t}} -  \frac{x^3}{2\ve t\sqrt{\ve t}}\right) e^{-\frac{x^2}{4\ve t}} \\
& +  \hbox{erf}\left(\frac{x}{2\sqrt{\ve t}}\right) 2 \left( 1-2b(0)t+\frac{(b(0)t)^2}{2} \right) e^{-b(0)t}.
\end{align*}
 Hence,
\begin{equation}
 \left\vert  \frac{\partial ^2 s_2 }{\partial t^2}  (x,t) \right\vert \leq C,\quad  x >0.
\end{equation}
The second order time derivative is bounded, but  not continuous, on the closed domain.
\end{subequations}

\end{document}